\documentclass[10pt]{amsart}

\usepackage{enumerate}
\usepackage{amsmath,amssymb}
\usepackage{color}

\newtheorem{theorem}{Theorem}[section]

\newtheorem*{q}{Question}
\newtheorem{corollary}[theorem]{Corollary}
\newtheorem{lemma}[theorem]{Lemma}

\newtheorem{proposition}[theorem]{Proposition}

\newtheorem{claim}[theorem]{Claim}

\theoremstyle{definition}

\def\M{{\mathbb{M}}}

\def\D{{\mathcal{D}}}
\def\P{{\mathcal{P}}}

\def\Z{{\mathcal{Z}}}

\def\otimesbar{{\mathbin{\overline{\otimes}}}}

\makeatletter
  
  \@addtoreset{equation}{section}
\makeatother

\sloppy
\usepackage{comment}

\newcommand{\vertiii}[1]{{\left\vert\kern-0.25ex\left\vert\kern-0.25ex\left\vert #1 
    \right\vert\kern-0.25ex\right\vert\kern-0.25ex\right\vert}}

\begin{document}

\title[Ring isomorphisms]{Ring isomorphisms of type II$_\infty$ locally measurable operator algebras}

\author[M. Mori]{Michiya Mori}

\address{Graduate School of Mathematical Sciences, The University of Tokyo, 3-8-1 Komaba Meguro-ku Tokyo, 153-8914, Japan; Interdisciplinary Theoretical and Mathematical Sciences Program (iTHEMS), RIKEN, 2-1 Hirosawa Wako Saitama, 351-0198, Japan.}
\email{mmori@ms.u-tokyo.ac.jp}

\thanks{The author was supported by RIKEN Special Postdoctoral Researcher Program and JSPS KAKENHI Grant Number 22K13934.}
\subjclass[2020]{Primary 46L10, Secondary 16E50, 47B49, 51D25.} 

\keywords{ring isomorphism; locally measurable operator; projection lattice; lattice isomorphism; von Neumann algebra}

\date{}

\begin{abstract}
We show that every ring isomorphism between the algebras of locally measurable operators for type II$_\infty$ von Neumann algebras is similar to a real $^*$-isomorphism. 
This together with previous results by the author and Ayupov--Kudaybergenov completely describes ring isomorphisms between the algebras of locally measurable operators as well as lattice isomorphisms between the projection lattices for a general pair of von Neumann algebras without finite type I direct summands.
\end{abstract}

\maketitle
\thispagestyle{empty}

\section{Introduction}
For a von Neumann algebra $M$, let $\P(M)$ denote the collection of all projections of $M$, i.e., $\P(M):=\{p\in M\mid p=p^*=p^2\}$. 
The importance of this collection in the investigation of the structure of a von Neumann algebra was first recognized by Murray and von Neumann in \cite{MN} in the 1930s. 
Recall that $\P(M)$ forms a lattice with respect to the partial order relation $p\leq q$ $\Leftrightarrow$ $pq=p$. 
This is why $\P(M)$ is called the \emph{projection lattice} of $M$. 
When $M$ is commutative, then $M$ can be identified with the $L_{\infty}$-space for some measure space. 
In this case, $\P(M)$ is identified with the collection of measurable sets modulo null sets. 
If we regard a general von Neumann algebra $M$ as a noncommutative measure space, then we may say that $\P(M)$ is the lattice of, so to speak, noncommutative measurable sets.
In this paper, we consider the following fairly natural question. 
\begin{q}
What is the general form of lattice isomorphisms between the projection lattices of a pair of von Neumann algebras?
\end{q}

Note that we only need to consider Question in the case where the two von Neumann algebras are of the same type (namely, one of I$_n$, I$_{\infty}$, II$_1$, II$_\infty$ or III), see \cite[Lemma 4.1]{M}.
In the finite-dimensional case, the fundamental theorem of projective geometry gives a partial answer to Question. 
It was von Neumann himself who was the first to consider Question for non-type I von Neumann algebras. 
He proved, as a byproduct of his theory on complemented modular lattices and von Neumann regular rings, that lattice isomorphisms between the projection lattices of a pair of type II$_1$ factors are in one-to-one correspondence with ring isomorphisms between the algebras of affiliated operators, see \cite[Chapter 2.4]{N} and Appendix 2 of \cite[Chapter 2.2]{N}. 

Feldman \cite{Fe} and Dye \cite{Dy} studied lattice isomorphisms of projection lattices under the orthogonality preserving assumption.  
Fillmore and Longstaff studied lattice isomorphisms for the lattice of closed subspaces of a complex normed space \cite{FL}, which particularly answers our Question in the case of type I$_\infty$ factors. 
In \cite{M}, the author studied lattice isomorphisms in the general setting of von Neumann algebras, and gave the following result which extends von Neumann's result for type II$_1$ factors. 
\begin{theorem}[{\cite[Theorem A]{M}}]\label{corr}
Let $M$ and $N$ be two von Neumann algebras.  
Suppose that $M$ does not have type I$_1$ nor I$_2$ direct summands, and that $\Phi\colon \P(M)\to \P(N)$ is a lattice isomorphism. 
Then there exists a unique ring isomorphism $\Psi\colon LS(M)\to LS(N)$ such that $\Phi(l(x)) = l(\Psi(x))$ for all $x\in LS(M)$. 
\end{theorem}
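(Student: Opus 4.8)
The plan is to reconstruct the ring $LS(M)$ and the isomorphism $\Psi$ from purely lattice-theoretic data, that is, to solve a coordinatization problem. The first observation is that $l(x)$ is nothing but the range projection of $x$, and that the only real content of the identity $\Phi(l(x))=l(\Psi(x))$ is that the lattice isomorphism induced on range projections by a ring isomorphism $\Psi$ coincides with the prescribed $\Phi$; so the task is to produce a ring isomorphism inducing $\Phi$ and to show it is unique. I would proceed type by type, using the central decomposition of $M$ together with the matching decomposition of $N$ dictated by $\Phi$, and treat each type separately.

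When $M$ is finite (types I$_n$ with $3\le n<\infty$, and II$_1$) the lattice $\P(M)$ is complemented and modular, and here I would invoke the classical coordinatization theory. The hypothesis that $M$ has no type I$_1$ or I$_2$ summand provides, over each central piece, a homogeneous basis of order at least $3$, so von Neumann's coordinatization theorem for complemented modular lattices (and, on the type I$_n$ central pieces, the fundamental theorem of projective geometry) reconstructs the coordinate ring from the lattice as a ring built out of perspectivities between the members of the basis. In this case $LS(M)$ is exactly the resulting $*$-regular ring, the assignment $p\mapsto pLS(M)$ identifies $\P(M)$ with its lattice of principal right ideals, and the reconstruction is canonical enough that $\Phi$ transports to the desired $\Psi$.

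The genuinely hard case is the infinite one (types I$_\infty$, II$_\infty$, III), where $\P(M)$ is neither modular nor the ideal lattice of a regular ring, so the classical machinery does not apply directly. For the semifinite types I$_\infty$ and II$_\infty$ I would localize to finite projections: for each finite $e\in\P(M)$ the interval $[0,e]\cong\P(eMe)$ is modular and can be coordinatized as above, yielding a ring isomorphism between the corners $eLS(M)e$ and $\Phi(e)LS(N)\Phi(e)$; since the finite projections form a net increasing to $1$, I would then prove that these corner isomorphisms are mutually compatible and glue them into a global $\Psi$. Type III is the worst subcase, because there are no nonzero finite projections to cut by; here I would instead exploit the extreme homogeneity of the lattice --- every nonzero projection is infinite and halves --- to manufacture systems of matrix units directly from lattice-theoretic halvings and perspectivities, reconstruct the corner rings from these, and again glue along an increasing net.

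The main obstacle I anticipate is precisely the failure of modularity in the infinite case: perspectivity, which is the vehicle carrying both the additive and the multiplicative structure of the coordinate ring, is well behaved only inside finite (hence modular) intervals, so the crux is not any single local coordinatization but the proof that the locally reconstructed operations are independent of the chosen finite cutoff and patch together consistently; for type III this is compounded by having to build the coordinatizing matrix units with no dimension or trace to guide the choice. Uniqueness should then be comparatively soft: the relation $\Phi(l(x))=l(\Psi(x))$ forces the value of $\Psi$ on the idempotents and partial isometries of a fixed coordinate system, and since such elements generate $LS(M)$ as a ring, any two ring isomorphisms inducing $\Phi$ must coincide.
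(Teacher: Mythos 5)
You should first be aware that the paper you are working against does not prove Theorem \ref{corr} at all: the statement is imported verbatim from \cite[Theorem A]{M}, so your proposal has to be measured against the proof in \cite{M} (and against the way the present paper uses and complements that result). Your finite-type outline is essentially the classical route and is sound in outline: for finite $M$ the lattice $\P(M)$ is complete, complemented and modular, $LS(M)$ is a regular ring whose lattice of principal right ideals is $\P(M)$, and the absence of type I$_1$ and I$_2$ summands yields, after central reduction, homogeneous bases of order at least $3$, so coordinatization theory (existence plus its functorial uniqueness) produces the unique $\Psi$. This is exactly von Neumann's argument for type II$_1$ factors, which \cite{M} set out to generalize.

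The genuine gaps are in the infinite types, which are the actual content of the theorem beyond von Neumann. In the semifinite case, the union of the finite corners is the proper, non-unital ideal $\{x\in LS(M): l(x)\text{ and }r(x)\text{ finite}\}$ of $LS(M)$; compatibility of the corner isomorphisms can indeed be arranged via uniqueness (after conjugating away skew idempotents, compare the proof of Claim \ref{claim2}), but a ring isomorphism defined on this ideal does not extend to $LS(M)$ by soft algebra. To extend it one must show that a coherent family of corner data assembles into an honest element of $LS(N)$: build an unbounded operator from the pieces, prove it is closable, prove the closure is affiliated and locally measurable, and prove the resulting global map is additive and multiplicative. This is precisely the delicate work occupying Section 3 of the present paper in the analogous II$_\infty$ gluing problem (the operators $y_p$, the closability of $\check{y}$, the proof that $\lvert\hat{y}\rvert\in LS(M)$ via Lemma \ref{inverse}, Corollary \ref{ortho}, and the preservation of LS-orthogonality \cite[Lemma 3.6]{M}); your proposal correctly names this step as the crux but supplies no mechanism for it. In type III the plan is void rather than merely incomplete: there are no nonzero finite projections, so there are no modular intervals to localize to; every corner $pMp$ is again of type III, so ``reconstruct the corner rings'' is circular; and the perspectivity calculus of classical coordinatization is valid only in modular lattices, hence nowhere in $\P(M)$. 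The proof in \cite{M} does not glue local coordinatizations at all; it is operator-theoretic and uniform in type, encoding operators by graph-type projections relative to a global decomposition of the identity and replacing modularity by two-projection (Halmos) analysis together with LS-orthogonality, whose invariance under lattice isomorphisms (\cite[Lemma 3.6]{M}) is what makes the graph calculus work for unbounded operators. That replacement of modularity by operator theory is the missing idea.

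A smaller point: your uniqueness argument leans on the unproved claim that the idempotents and partial isometries of a coordinate frame generate $LS(M)$ as a ring. The standard argument is softer and avoids this: if $\Theta$ is a ring automorphism of $LS(M)$ inducing the identity on $\P(M)$, then $\Theta$ fixes every projection $p$, because $\Theta(p)$ is an idempotent whose range projection $l(\Theta(p))$ equals $p$ and whose kernel projection $l(1-\Theta(p))=l(\Theta(1-p))$ equals $p^{\perp}$; it then fixes every $x\in LS(M)$, because the graph projections $l((1+x)p)$, taken over projections $p$ with $xp$ bounded (a family with join $1$ giving a core for $x$), determine $x$, and these projections are visibly invariants of the induced lattice map.
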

Here $LS(M)$ is the collection of locally measurable operators for a von Neumann algebra $M$, and $l(x)$ means the left support projection for $x\in LS(M)$.
The detailed definition of undefined terms will be given in Section \ref{preliminaries}.
Note that the converse of this theorem is easy to prove. 
Namely, if $\Psi\colon LS(M)\to LS(N)$ is a ring isomorphism, then the formula $\Phi(l(x)) = l(\Psi(x))$, $x\in LS(M)$, uniquely determines a lattice isomorphism $\Phi\colon \P(M)\to\P(N)$ \cite[Proposition 3.1]{M}. 
Therefore, our Question can be answered once we get the description of ring isomorphisms of locally measurable operator algebras. 

The author further studied such ring isomorphisms in \cite{M} in several cases. 
Below we consider a ring isomorphism $\Psi\colon LS(M)\to LS(N)$ under the assumption that $M$ and $N$ are of the same type. 

If $M$ and $N$ are of type I$_n$ ($1\leq n<\infty$), the description can be done utilizing ring isomorphisms between their centers \cite[Proposition 4.2]{M} (see also \cite{AAKD}). 
The center of $LS(M)$ is in fact identified with the algebra of all complex-valued measurable functions on a measure space (modulo the almost everywhere equivalence relation).   
Getting a further concrete description of the general form of the latter ring isomorphisms seems to be out of the reach of the theory of operator algebras (see the remark right after \cite[Proposition 4.2]{M}).

The author proved that $\Psi\colon LS(M)\to LS(N)$ is similar to a real $^*$-isomorphism whenever $M$ and $N$ are type I$_\infty$ or III von Neumann algebras \cite[Theorem B]{M}.
See \cite{M} for a more detailed overview of the above results and the history of related topics.
In the same paper, the author left the case of type II as an open question, conjecturing that every ring isomorphism $\Psi\colon LS(M)\to LS(N)$ is similar to a real $^*$-isomorphism when $M$ and $N$ are of type II \cite[Conjecture 5.1]{M}.
This conjecture was soon confirmed by Ayupov and Kudaybergenov in the case of type II$_1$ von Neumann algebras \cite[Theorem 1.4]{AK} . 

The purpose of this article is to fill in the still blank case of type II$_\infty$. 
Namely, the main theorem is as follows. 
\begin{theorem}\label{main}
Let $M, N$ be von Neumann algebras of type II$_{\infty}$. 
If $\Psi\colon LS(M)\to LS(N)$ is a ring isomorphism, then there exist a real $^*$-isomorphism $\psi\colon M\to N$ (which extends to a real $^*$-isomorphism from $LS(M)$ onto $LS(N)$) and an invertible element $y\in LS(N)$ such that  $\Psi(x)=y\psi(x)y^{-1}$, $x\in LS(M)$.
\end{theorem}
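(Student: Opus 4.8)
To prove Theorem \ref{main} the plan is to reduce the type II$_\infty$ situation to the type II$_1$ case already established by Ayupov and Kudaybergenov \cite[Theorem 1.4]{AK}, by \emph{compressing} $\Psi$ to a type II$_1$ corner and then \emph{amplifying} the real $^*$-isomorphism so obtained back to the whole algebra. Throughout I would exploit the lattice isomorphism $\Phi\colon\P(M)\to\P(N)$ attached to $\Psi$ through Theorem \ref{corr} (so that $\Phi(p)=l(\Psi(p))$ for every projection $p$), because the two features I must transport---\emph{finiteness} of a projection and having \emph{central support} $1$---are properties of the projection lattice alone, and are therefore preserved by $\Phi$.

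\emph{Compression.} Since $M$ is semifinite of type II$_\infty$, I would first fix a finite projection $e\in M$ with central support $1$; then $eMe$ is of type II$_1$ and $LS(eMe)=e\,LS(M)\,e$. The image $\Psi(e)$ is an idempotent of $LS(N)$ whose left support is the projection $f:=l(\Psi(e))=\Phi(e)$, and in $LS(N)$ every idempotent is similar to its left support, so there is an invertible element conjugating $\Psi(e)$ to $f$. Because $\Phi$ preserves finiteness (equivalently, modularity of the interval $[0,e]$ of the lattice) and the property of having central support $1$, the projection $f$ is finite with central support $1$ in $N$; as $N$ is of type II, $fNf$ is of type II$_1$. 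Conjugating by the invertible element above, I obtain a ring isomorphism $\Psi_0\colon LS(eMe)\to LS(fNf)$ of type II$_1$ locally measurable operator algebras, and by \cite[Theorem 1.4]{AK} it is similar to a real $^*$-isomorphism $\psi_0\colon eMe\to fNf$.

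\emph{Amplification.} It remains to promote $\psi_0$ to a real $^*$-isomorphism $\psi\colon M\to N$ with $\Psi=y\,\psi(\cdot)\,y^{-1}$ for a single invertible $y\in LS(N)$. Using that $M$ is properly infinite, I would choose mutually orthogonal projections $(e_i)$ with $e_i\sim e$ and $\sum_i e_i=1$, giving a system of matrix units $(e_{ij})$ over the corner $eMe$. The equivalences $e_i\sim e_j$ are seen by $\Phi$, so $\Psi$ carries $(e_{ij})$ to a system of \emph{idempotent} matrix units over $\Psi(e)$, which after a further conjugation becomes a genuine system $(f_{ij})$ of matrix units of projections in $N$; absorbing also the similarity from \cite[Theorem 1.4]{AK}, all of these conjugations can be collected into one invertible $y$. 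Writing $\tilde\Psi=y^{-1}\Psi(\cdot)\,y$ and defining $\psi$ by amplifying $\psi_0$ through the matrix units, $\psi(e_{i1}\,x\,e_{1j})=f_{i1}\,\psi_0(x)\,f_{1j}$ for $x\in eMe$, one checks that $\psi$ is a real $^*$-isomorphism $M\to N$ and that $\Theta:=\psi^{-1}\circ\tilde\Psi$ is a ring automorphism of $LS(M)$ fixing each $e_{ij}$ and fixing $LS(eMe)$ pointwise. Since an element of $LS(M)$ is determined by its matrix entries $e_{ii}\,(\,\cdot\,)\,e_{jj}$ with respect to a system of matrix units whose diagonal sums to $1$, such a $\Theta$ must be the identity; hence $\tilde\Psi=\psi$ and $\Psi=y\,\psi(\cdot)\,y^{-1}$, as required.

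The main obstacle is the amplification, and specifically the construction of the \emph{single} invertible $y$ in the generality required. The clean splitting $M\cong eMe\,\otimesbar\,B(H)$ underlying the matrix-unit picture is available for factors and, more generally, when the multiplicity of $e$ is constant across the center, but for an arbitrary type II$_\infty$ algebra the number of copies of $e$ needed to exhaust $1$ may vary over $Z(M)$, and without a separability hypothesis even the existence of a decomposition $\sum_i e_i=1$ with $e_i\sim e$ is delicate. Overcoming this requires a central exhaustion that reduces matters to the homogeneous case piece by piece, while checking that the partial isometries implementing $e_i\sim e_j$---and hence the local conjugations---patch into one globally defined invertible element rather than a family depending on the piece. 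This compatibility, rather than the verification that $\psi$ is a real $^*$-isomorphism, is where the substantive work lies.
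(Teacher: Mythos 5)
Your compression step is correct and is essentially the paper's own reduction (Lemma \ref{31} together with Claims \ref{claim1} and \ref{claim2}): cut down to a finite projection $e$ of full central support, conjugate the idempotent $\Psi(e)$ to its left support $\Phi(e)$ by the invertible element $\Psi(e)+\Phi(e)^{\perp}$, and apply \cite[Theorem 1.4]{AK} to the resulting ring isomorphism of type II$_1$ corners. Your closing observation is also fine: a ring automorphism of $LS(M)$ fixing a system of matrix units whose diagonal sums to $1$ and fixing the corner $LS(eMe)$ pointwise must be the identity. The genuine gap is the clause ``which after a further conjugation becomes a genuine system $(f_{ij})$ of matrix units of projections in $N$,'' together with ``all of these conjugations can be collected into one invertible $y$.'' For a finite system of matrix units this is routine, but your index set is infinite, and the natural candidate for $y$ is a patching of the local similarities over the increasing net of finite corners --- concretely, the formal infinite sum $\sum_i \Psi(e_{i1})e_{1i}$. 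A priori this defines only a densely defined unbounded operator: nothing in your argument shows that it is closable, that its closure is \emph{locally measurable}, or that it is invertible \emph{in} $LS(N)$. The danger is real, because $\Phi$ does not preserve orthogonality: the idempotents $\Psi(e_{ii})$ annihilate one another, but their left supports $\Phi(e_{ii})$ need not be orthogonal, and if they leaned into one another too strongly the patched operator would fail to be LS-invertible. Ruling this out is the analytic heart of the paper, namely Proposition \ref{special} and the arguments around it: closability uses that $\Phi$ preserves arbitrary infima, while membership of $\lvert\hat{y}\rvert$ in $LS(M)$ and its invertibility in $LS(M)$ require Lemma \ref{inverse}, Corollaries \ref{LS} and \ref{ortho}, and the fact that lattice isomorphisms preserve LS-orthogonality (\cite[Lemma 3.6]{M}). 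Your proposal asserts the conclusion of this analysis rather than proving it.

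You also locate the remaining substantive work in the wrong place. The inhomogeneity of a general type II$_\infty$ algebra (the multiplicity of $e$ varying over the center) is the harmless obstacle: the paper disposes of it by a Zorn's lemma argument (Lemma \ref{31}), producing an orthogonal family of central projections on each of which the homogeneous picture $rM\cong eMe\otimesbar B(H)$ holds, and the central pieces recombine trivially, since real $^*$-isomorphisms and LS-invertible elements defined on orthogonal central summands assemble into a global real $^*$-isomorphism and a global invertible element of $LS(N)$ (this is exactly what the ``local'' in locally measurable is designed to allow; see Subsection \ref{LSdef}). The difficulty your proposal leaves untouched is present already when $M$ and $N$ are factors: it is the assembly of the infinitely many local conjugations, defined on the corners $(e_{11}+\cdots+e_{nn})M(e_{11}+\cdots+e_{nn})$, into a single element of $LS(N)$, i.e.\ precisely the unbounded-operator analysis described above.
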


This together with \cite[Theorem B]{M} and \cite[Theorem 1.4]{AK} verifies
\begin{theorem}
Let $M, N$ be von Neumann algebras without type I$_\mathrm{fin}$ direct summands. 
If $\Psi\colon LS(M)\to LS(N)$ is a ring isomorphism, then there exist a real $^*$-isomorphism $\psi\colon M\to N$ (which extends to a real $^*$-isomorphism from $LS(M)$ onto $LS(N)$) and an invertible element $y\in LS(N)$ such that  $\Psi(x)=y\psi(x)y^{-1}$, $x\in LS(M)$.
\end{theorem}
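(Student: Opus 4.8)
The plan is to split $M$ and $N$ along their central type decompositions and to apply the already available type-by-type results to each piece. Since $M$ has no finite type I direct summand, there are pairwise orthogonal central projections $z_{\mathrm{I}_\infty}, z_{\mathrm{II}_1}, z_{\mathrm{II}_\infty}, z_{\mathrm{III}}\in\P(Z(M))$ with sum $1$ such that $z_X M$ is of type $X$ for each $X\in\{\mathrm{I}_\infty,\mathrm{II}_1,\mathrm{II}_\infty,\mathrm{III}\}$, and likewise central projections $z_X'\in\P(Z(N))$ for $N$. Recall that $z\,LS(M)=LS(zM)$ for a central projection $z$, so that $LS(M)=\bigoplus_X z_X LS(M)$ as a ring, and similarly for $N$. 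Everything then reduces to showing that $\Psi$ respects these decompositions, i.e.\ that $\Psi(z_X)=z_X'$ for each $X$.

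To see this, first note that the ring isomorphism $\Psi$ carries the center $Z(LS(M))$ onto $Z(LS(N))$. As recalled in the introduction, each of these centers is identified with an algebra of complex-valued measurable functions; in such an algebra an idempotent is necessarily $\{0,1\}$-valued, hence self-adjoint, so $\Psi$ sends central projections to central projections. Let $\Phi\colon\P(M)\to\P(N)$ be the lattice isomorphism associated with $\Psi$ via Theorem \ref{corr}, so that $\Phi(l(x))=l(\Psi(x))$ for all $x$. For a central projection $z$ we have $l(z)=z$ and, since $\Psi(z)$ is again a projection, $l(\Psi(z))=\Psi(z)$; hence $\Phi(z)=\Psi(z)$. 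Thus $\Psi$ agrees on $\P(Z(M))$ with the Boolean isomorphism induced by $\Phi$, and by the type-invariance of lattice isomorphisms recorded in \cite[Lemma 4.1]{M} this isomorphism sends $z_X$ to $z_X'$. Therefore $\Psi(z_X)=z_X'$, and $\Psi$ restricts to a ring isomorphism $\Psi_X\colon LS(z_X M)\to LS(z_X' N)$ between locally measurable operator algebras of von Neumann algebras of the same type $X$.

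Now I would apply the matching classification on each corner: \cite[Theorem B]{M} for $X=\mathrm{I}_\infty$ and $X=\mathrm{III}$, \cite[Theorem 1.4]{AK} for $X=\mathrm{II}_1$, and Theorem \ref{main} for $X=\mathrm{II}_\infty$. Each produces a real $^*$-isomorphism $\psi_X\colon z_X M\to z_X' N$ (extending to $LS$) and an invertible element $y_X\in LS(z_X' N)$ with $\Psi_X(x)=y_X\psi_X(x)y_X^{-1}$. Setting $\psi:=\bigoplus_X\psi_X$, a real $^*$-isomorphism of $M$ onto $N$ that extends to $LS(M)\to LS(N)$, and $y:=\sum_X z_X' y_X$, which is invertible with inverse $\sum_X z_X' y_X^{-1}$ because the corners are orthogonal and sum to $1$, one checks corner by corner that $\Psi(x)=y\psi(x)y^{-1}$ for all $x\in LS(M)$, using that $\Psi$ respects the four summands.

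The only genuine content is the reduction step, namely that $\Psi$ preserves the central type decomposition; this is itself a short consequence of Theorem \ref{corr} together with the type-invariance in \cite[Lemma 4.1]{M}. Once the four corners are separated, the statement follows formally from the three cited results and the main theorem of this paper, so I expect no substantial obstacle beyond this bookkeeping.
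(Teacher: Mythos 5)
Your proposal is correct and follows essentially the same route as the paper, which treats this theorem as an immediate consequence of combining \cite[Theorem B]{M} (types I$_\infty$ and III), \cite[Theorem 1.4]{AK} (type II$_1$), and Theorem \ref{main} (type II$_\infty$), with the reduction to same-type summands resting on \cite[Lemma 4.1]{M}. The paper leaves the central-decomposition bookkeeping implicit; your write-up simply makes explicit the facts that $\Psi$ sends central projections to central projections, that $\Phi$ preserves type, and that the corner-wise data glue together.
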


Furthermore, applying \cite[Theorem A]{M}, we obtain 
\begin{theorem}\label{projection}
Let $M, N$ be von Neumann algebras without type I$_\mathrm{fin}$ direct summands. 
If $\Phi\colon \P(M)\to \P(N)$ is a lattice isomorphism, then there exist a real $^*$-isomorphism $\psi\colon M\to N$ and an invertible element $y\in LS(N)$ such that  $\Phi(p)=l(y\psi(p))$, $p\in \P(M)$.
\end{theorem}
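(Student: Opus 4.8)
The plan is to read off this statement directly from Theorem~\ref{corr} together with the structure theorem for ring isomorphisms established just above it. Since $M$ has no type I$_\mathrm{fin}$ direct summand, it has in particular no type I$_1$ nor I$_2$ direct summand, so the hypotheses of Theorem~\ref{corr} are met. That theorem therefore applies to the given lattice isomorphism $\Phi\colon\P(M)\to\P(N)$ and produces a (unique) ring isomorphism $\Psi\colon LS(M)\to LS(N)$ satisfying $\Phi(l(x))=l(\Psi(x))$ for every $x\in LS(M)$.

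Next I would invoke the preceding Theorem (the one combining Theorem~\ref{main} with \cite[Theorem B]{M} and \cite[Theorem 1.4]{AK}), which is applicable under precisely our standing hypothesis that $M$ and $N$ have no type I$_\mathrm{fin}$ direct summands. It yields a real $^*$-isomorphism $\psi\colon M\to N$, extended to $LS(M)$, and an invertible $y\in LS(N)$ such that $\Psi(x)=y\psi(x)y^{-1}$ for all $x\in LS(M)$. It then suffices to evaluate $\Phi$ on a single projection. Fix $p\in\P(M)$; since $p$ is itself a projection we have $l(p)=p$, and hence $\Phi(p)=\Phi(l(p))=l(\Psi(p))=l(y\psi(p)y^{-1})$.

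The only remaining point is the elementary observation that the left support is insensitive to right multiplication by an invertible element: for $a\in LS(N)$ and invertible $b\in LS(N)$ one has $l(ab)=l(a)$, because for a projection $e$ the identity $ea=a$ is equivalent to $eab=ab$. Applying this with $a=y\psi(p)$ and $b=y^{-1}$ gives $l(y\psi(p)y^{-1})=l(y\psi(p))$, so that $\Phi(p)=l(y\psi(p))$, which is exactly the asserted form. I do not expect a genuine obstacle here: the substance of the theorem has already been absorbed into Theorem~\ref{corr} and the preceding Theorem, and what is left is to verify that the absence of type I$_\mathrm{fin}$ direct summands supplies the type hypotheses of both results and to record the left-support identity above.
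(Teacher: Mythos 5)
Your proposal is correct and follows essentially the same route as the paper, which obtains Theorem~\ref{projection} by combining Theorem~\ref{corr} (i.e.\ \cite[Theorem A]{M}) with the preceding theorem on ring isomorphisms; your explicit verification of the identity $l(y\psi(p)y^{-1})=l(y\psi(p))$ is exactly the small computation the paper leaves implicit.
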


The proof of our main theorem rests on the results in the previous papers \cite{M, AK} together with delicate arguments concerning unbounded operators and their invertibility.

\section{Preliminaries}\label{preliminaries}
In this paper, we will freely use basic facts about von Neumann algebras as in \cite{KR}.

For a Hilbert space $H$, the von Neumann algebra of all bounded linear operators on $H$ is written as $B(H)$.
Let $M\subset B(H)$ be a von Neumann algebra. 
We use the symbol $\sim$ (resp.\ $\prec$) to mean the Murray--von Neumann equivalence relation (resp.\ preorder relation) on $\P(M)$. 
That is, for $p, q\in \P(M)$, $p\sim q$ (resp.\ $p\prec q$) means that there exists a partial isometry $v\in M$ such that $p= vv^*$ and $q= v^*v$ (resp.\ $p\leq vv^*$ and $q= v^*v$). 
For $p, q\in \P(M)$, we write $p\perp q$ if $p$ and $q$ are orthogonal, i.e., $pq=qp=0$, or equivalently, $pH\perp qH$ in the Hilbert space $H$. 
We use the symbol $p^{\perp}:= 1-p$ for $p\in \P(M)$. 
The symbol $\Z(M)=\{x\in M\mid xy=yx\text{ for all }y\in M\}$ means the center of $M$.
For an operator $x\in M$, let $z(x)$ denote the central support of $x$, which is the minimal central projection $r$ in $\P(\Z(M))$ with $rx=x$.

Let us recall some terms about the geometry of $\P(M)$. 
A projection $p\in \P(M)$ is said to be \emph{finite} if any subprojection $p\geq q\in \P(M)$ with $p\sim q$ satisfies $p=q$. 
A projection $p\in \P(M)$ is said to be \emph{properly infinite} if there is $p\geq q\in \P(M)$ with $p\sim q\sim p-q$.  
It is well-known that for every $p\in \P(M)$ there is a unique subprojection $p\geq p_1\in \P(M)$ such that $p_1$ is finite, $p-p_1$ is properly infinite, and $z(p_1)z(p-p_1)=0$.
We call $p_1$ the \emph{finite part} of $p$, and $p-p_1$ the \emph{properly infinite part} of $p$.

\subsection{Various isomorphisms of $^*$-algebras}
For $^*$-algebras $A$ and $B$, a (not necessarily linear) bijection $\psi\colon A\to B$ is called
\begin{itemize}
\item a \emph{ring isomorphism} if it is additive and multiplicative,
\item a \emph{real $^*$-isomorphism} if it is real-linear, multiplicative, and satisfies $\psi(x^*)=\psi(x)^*$ for any $x\in A$, 
\item a \emph{$^*$-isomorphism} if it is a complex-linear real $^*$-isomorphism, and
\item a \emph{conjugate-linear $^*$-isomorphism} if it is a conjugate-linear real $^*$-isomorphism. 
\end{itemize}
It is well-known that a real $^*$-isomorphism between two von Neumann algebras is decomposed into the direct sum of a (complex-linear) $^*$-isomorphism and a conjugate-linear $^*$-isomorphism. See for example \cite[Lemma 2.1 (4)]{M}.
Two mappings $\psi_1$, $\psi_2$ from $A$ into $B$ are said to be \emph{similar} if there is an invertible element $y\in B$ such that $\psi_2(x)=y\psi_1(x)y^{-1}$ for every $x\in A$. 

In this paper, we will frequently use the following fact. 
Let $A_1, A_2, A_3$ be $^*$-algebras. 
If $\psi_1\colon A_1\to A_2$ and $\psi_2\colon A_2\to A_3$ are bijections that are similar to real $^*$-isomorphisms, then $\psi_2\circ\psi_1\colon A_1\to A_3$ is also similar to a real $^*$-isomorphism. The proof is easy.

\subsection{The algebra of locally measurable operators}\label{LSdef}
Let $M\subset B(H)$ be a von Neumann algebra. 
Let $x$ be a densely-defined closed operator on $H$. 
Then $x$ is said to be \emph{affiliated with $M$} (and we write $x\eta M$) if 
$yx\subset xy$ for any $y\in M'$.
Here $M':=\{y\in B(H)\mid ay=ya\text{ for any }a\in M\}$ denotes the commutant of $M$. 
We say $x\eta M$ is a \emph{measurable} operator of $M$ if the spectral projection $\chi_{(c, \infty)}(\lvert x\rvert)\in \P(M)$ is a finite projection in $M$ for some real number $c>0$. 
We say $x\eta M$ is a \emph{locally measurable} operator of $M$ if there exists an increasing sequence $(r_n)_{n\geq 1}$ of central projections in $M$ such that $r_n\nearrow 1$ and $xr_n$ is a measurable operator of $M$ for any $n$.
In other words, $x\eta M$ is locally measurable if the central support of the properly infinite part of the projection $\chi_{(c, \infty)}(\lvert x\rvert)$ tends to zero as $c\to\infty$.

We use the symbol $LS(M)$ to mean the collection of all locally measurable operators of $M$.
If $x, y\in LS(M)$, then $x^*$ and the closures of $xy$, $x+y$ are in $LS(M)$. 
More generally, the closure of any algebraic combination of elements in $LS(M)$ lies in $LS(M)$.  
These facts enable us to regard $LS(M)$ as a $^*$-algebra that contains $M$. 
In what follows, an algebraic combination of operators in $LS(M)$ refers to the corresponding operator in $LS(M)$ (i.e.\ its closure), unless otherwise stated.

It is not difficult to see that each element of $LS(M)$ can be written as the direct sum of bounded operators if $M$ is of type I or III. 
More precisely, for each $x\in LS(M)$, there exists an increasing sequence $(r_n)_{n\geq 1}$ of central projections in $M$ such that $r_n\nearrow 1$ and $xr_n$ is bounded for any $n$. 
Note that the same statement does not hold for a type II von Neumann algebra $M$.
See \cite{Y} and \cite{Se} for more details of (locally) measurable operators. 

For $x\in LS(M)$, let $l(x)\in \P(M)$ denote the left support of $x$. 
That is, $l(x) :=\bigwedge \{p\in \P(M)\mid px = x\}$. 
Similarly, for the right support, we use the symbol $r(x) :=\bigwedge \{p\in \P(M)\mid x = xp\}$.
Then the condition $xy=0$ is equivalent to $r(x)l(y)=0$ for a pair of operators $x, y\in LS(M)$, see the last part of \cite[Subsection 2.2]{M}.

\subsection{Two projections}\label{halmos}
When we play with projections, it is often useful to look at two projections. 
The so-called Halmos's two projection theorem \cite{H} (see also \cite{BS}) describes the relative position of a general pair of projections.  
Let us briefly recall it from the viewpoint of von Neumann algebra theory. 

Let $M\subset B(H)$ be a von Neumann algebra and $p, q\in \P(M)$. 
Then we may find a unique pair of mutually orthogonal equivalent projections $e_1, e_2\in \P(M)$ with the following properties. 
\begin{itemize}
\item The Hilbert space $H$ is orthogonally decomposed as  
\begin{equation}\label{h}
H = (p\wedge q^{\perp}) H \oplus (p^{\perp} \wedge q) H \oplus (p\wedge q) H \oplus (p^{\perp} \wedge q^{\perp}) H \oplus (e_1+e_2)H.
\end{equation}
\item By choosing a suitable identification of $(e_1+e_2)M(e_1+e_2)$ with $\M_2(e_1Me_1)$, $p$ and $q$ are decomposed as
\[
p= 1\oplus 0\oplus 1\oplus 0\oplus 
\begin{pmatrix}
1&0\\
0&0
\end{pmatrix},\quad 
q= 0\oplus 1\oplus 1\oplus 0\oplus 
\begin{pmatrix}
a^2&ab \\
ab&b^2
\end{pmatrix}
\]
with respect to the Hilbert space decomposition \eqref{h}, where $a$ and $b$ are positive injective operators in $e_1Me_1$ such that $a^2+b^2=e_1$. 
\end{itemize}
Assume that $p\wedge q = 0$. 
We say $p$ is \emph{LS-orthogonal} to $q$ if the operator $b\in e_1Me_1$ is invertible in $LS(e_1Me_1)$.
More details can be found in \cite{M}.

\subsection{A few facts on locally measurable operators}
In this subsection, let $M$ be a von Neumann algebra.
We prove a few facts about the invertibility of positive operators in $LS(M)$ for later use. 

\begin{lemma}\label{inverse}
Let $a\in LS(M)$ be a positive and injective operator.
Then the following are equivalent. 
\begin{enumerate}
\item The operator $a$ is not invertible in $LS(M)$.
\item There are a strictly decreasing sequence $c_n$ of positive real numbers and a sequence $f_n\in \P(M)$ of pairwise equivalent nonzero projections with the following properties: (i) $c_n\to 0$ as $n\to\infty$. (ii) $f_n\leq \chi_{(c_{n+1}, c_n]}(a)$.
\item There is a sequence $f_n\in \P(M)$ of pairwise equivalent nonzero projections such that ($f_naf_n$ is bounded for each $n\geq 1$ and) $\lVert f_naf_n\rVert\to 0$. 
\end{enumerate}
\end{lemma}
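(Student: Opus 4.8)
The plan is to reduce all three conditions to a single structural statement about the spectral projections of $a$. Write $p_\epsilon:=\chi_{(0,\epsilon)}(a)$ (equal to $\chi_{[0,\epsilon)}(a)$ since $a$ is injective) and let $g_\epsilon$ be the properly infinite part of $p_\epsilon$. Since $a$ is positive and injective it has dense range, so its (unbounded) inverse $a^{-1}$ is a well-defined positive self-adjoint operator affiliated with $M$, and $a$ is invertible in the ring $LS(M)$ precisely when $a^{-1}\in LS(M)$. Now $\chi_{(c,\infty)}(a^{-1})=\chi_{(0,1/c)}(a)=p_{1/c}$, so by the criterion for local measurability recalled in Subsection \ref{LSdef}, $a^{-1}\in LS(M)$ if and only if $z(g_{1/c})\to 0$ as $c\to\infty$. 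A short argument shows $\epsilon'\le\epsilon\Rightarrow z(g_{\epsilon'})\le z(g_\epsilon)$ (a subprojection of a projection whose finite part is cut off has its properly infinite part supported where the larger one does), so this family is monotone and the condition reads $z_0:=\bigwedge_{\epsilon>0}z(g_\epsilon)=0$. Thus I would establish the backbone equivalence: condition (1) holds $\iff z_0\neq 0$.

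With this in hand, (2)$\Rightarrow$(3) is immediate: if $f_n\le\chi_{(c_{n+1},c_n]}(a)$ then $f_naf_n$ is bounded with $\lVert f_naf_n\rVert\le c_n\to 0$. For (3)$\Rightarrow$(1) I would argue the contrapositive, assuming $z_0=0$ and a sequence $(f_n)$ as in (3). Fixing $\epsilon>0$ and using $a\ge\epsilon(1-p_\epsilon)$ gives $\lVert (1-p_\epsilon)f_n\rVert^2=\lVert f_n(1-p_\epsilon)f_n\rVert\le\epsilon^{-1}\lVert f_naf_n\rVert\to 0$; hence $\lVert(1-p_\epsilon)f_n\rVert<1$ for large $n$, which by the standard comparison fact yields $f_n\prec p_\epsilon$, and since all $f_n$ are equivalent, $f_1\prec p_\epsilon$ for every $\epsilon>0$. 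Because $z(g_\epsilon)\downarrow 0$ and $f_1\ne 0$, I can pick $\epsilon_0$ with $z':=z(f_1)z(g_{\epsilon_0})^{\perp}\ne 0$; by monotonicity $z'\le z(g_\epsilon)^{\perp}$ for all $\epsilon\le\epsilon_0$, so $z'p_\epsilon$ equals the finite part of $z'p_\epsilon$ and is a finite projection with $z'p_\epsilon\downarrow 0$, while $e:=z'f_1\neq0$ satisfies $e\prec z'p_\epsilon$. Working inside the finite von Neumann algebra $(z'p_{\epsilon_0})M(z'p_{\epsilon_0})$ with its faithful normal center-valued trace $T$, the subprojections of $z'p_\epsilon$ realizing $e\prec z'p_\epsilon$ all have the same nonzero value $T(e)\le T(z'p_\epsilon)$, and normality forces $T(z'p_\epsilon)\downarrow 0$, a contradiction. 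This closes (3)$\Rightarrow$(1).

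The remaining and genuinely delicate implication is (1)$\Rightarrow$(2). Here $z_0\neq 0$, and after replacing $M$ by $z_0M$ (and $a$ by $z_0a$) I may assume $z(g_\epsilon)=1$ for all $\epsilon$; since a projection dominating an infinite projection is infinite, this forces $M$ itself to be properly infinite, and $\chi_{(0,\epsilon)}(a)$ is properly infinite with full central support for every $\epsilon>0$. The goal is then to produce $c_1>c_2>\cdots\to 0$ and nonzero $f_n\le\chi_{(c_{n+1},c_n]}(a)$ that are pairwise equivalent (note they are automatically pairwise orthogonal, lying in disjoint spectral intervals). I would build these recursively: having chosen $c_n$, I need to find $c_{n+1}<c_n$ and a copy of a fixed template projection inside the annulus $\chi_{(c_{n+1},c_n]}(a)$.

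The main obstacle is exactly this localization of an equivalence class into a spectral annulus: the spectral mass of $a$ near $0$ may escape entirely into arbitrarily small eigenvalue bands, so that every annulus $\chi_{(\delta,c_n)}(a)$ is finite even though the whole tail $\chi_{(0,c_n)}(a)=\bigvee_{\delta>0}\chi_{(\delta,c_n)}(a)$ is properly infinite. A naive "subequivalence passes to an increasing supremum" statement is false in general, so properness of the limit must be used. In the type II$_\infty$ setting that concerns us I would resolve this with a faithful normal semifinite trace $\mathrm{Tr}$: since $\chi_{(0,c_n)}(a)$ is properly infinite its trace is infinite, so I can choose $c_{n+1}<c_n$ (also with $c_{n+1}<1/(n+1)$ to force $c_n\to 0$) making $\mathrm{Tr}\big(\chi_{(c_{n+1},c_n]}(a)\big)$ as large as I like, and then select $f_n\le\chi_{(c_{n+1},c_n]}(a)$ of a fixed finite trace; equal traces give the required equivalences, with the center-valued refinement handling the non-factor case by matching central dimensions. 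The genuinely easy types I$_\infty$ and III are handled by their special comparison structure (abelian projections of full central support, respectively the fact that nonzero projections of equal central support are equivalent), choosing the $f_n$ with a common central support. I expect the bookkeeping in this last step, ensuring the template can be realized simultaneously as a finite-trace piece in every chosen annulus, to be where the real work lies.
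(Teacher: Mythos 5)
Your reduction of (1) to the criterion $z_0:=\bigwedge_{\epsilon>0}z(g_\epsilon)\neq 0$ is correct (it is the paper's characterization of local measurability applied to the unbounded inverse $a^{-1}$), your (2)$\Rightarrow$(3) is the same one-line observation as in the paper, and your (3)$\Rightarrow$(1) is sound: the comparison step ($\lVert(1-p_\epsilon)f_n\rVert<1$ forces $f_n\wedge p_\epsilon^{\perp}=0$, hence $f_n\prec p_\epsilon$) and the concluding center-valued trace contradiction in a finite corner are exactly the two ingredients of the paper's argument, merely organized around your $z_0$-criterion instead of the paper's reduction to a bounded operator with locally measurable inverse. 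The genuine gap is in (1)$\Rightarrow$(2), at precisely the point you set aside as ``bookkeeping.''

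Over a nontrivial center, your selection rule for $c_{n+1}$ --- make $\mathrm{Tr}\bigl(\chi_{(c_{n+1},c_n]}(a)\bigr)$ large, then pick $f_n$ of a fixed finite trace, ``matching central dimensions'' --- cannot be the engine of a proof, because the scalar trace of an annulus carries no information about how its mass is distributed over the center. Concretely, take $M=L^\infty[0,1]\otimesbar B(\ell^2)$ (tensor with a II$_1$ factor if you insist on type II$_\infty$) and let $a\in M$ act fiberwise so that the fiber over $\omega$ has an infinite-multiplicity eigenvalue at $2^{-k}$ for every positive integer $k\neq\lceil 1/\omega\rceil$, with the one remaining band placed at $1$. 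Then $a$ is positive, injective, and not invertible in $LS(M)$, and the choice $c_n=2^{-n}$ satisfies your rule: every annulus $\chi_{(2^{-n-1},2^{-n}]}(a)$ has infinite trace. Yet the central support of the $n$-th annulus is the indicator of $\{\omega:\lceil 1/\omega\rceil\neq n\}$, so $\bigwedge_{n\geq 1}z\bigl(\chi_{(c_{n+1},c_n]}(a)\bigr)=0$; since equivalent projections have equal central supports, \emph{no} pairwise equivalent nonzero $f_n\leq\chi_{(c_{n+1},c_n]}(a)$ exist for this sequence. The recursion has reached a dead end even though every step obeyed your criterion, and the same example defeats your type I$_\infty$ prescription ``choose the $f_n$ with a common central support,'' since no common central support is available. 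The missing idea is a rule that makes the $c_n$ control the \emph{central} distribution of the spectral mass, and this is the actual content of the paper's proof: for types I and III, fix a faithful normal state $\tau$ on $\Z(M)$ and choose $c_n$ with $\tau\bigl(z(\chi_{(c_{n+1},c_n]}(a))\bigr)\geq 1-3^{-n}$, so that summability of the defects yields $\bigwedge_n z\bigl(\chi_{(c_{n+1},c_n]}(a)\bigr)\neq 0$; for type II, fix a normal semifinite faithful trace and a finite projection $p$ with $\tau(p)=1$, choose (via \cite[Lemma 2.3]{Mor2}) $c_n$ and $p_n\leq\chi_{(c_{n+1},c_n]}(a)$ with $p_n\prec p$ and $\tau(p_n)\geq 1-3^{-n}$, transport the $p_n$ to subprojections $\tilde p_n\leq p$, use summability to get a common template $\tilde p:=\bigwedge_n\tilde p_n\neq 0$, and pull copies $f_n\sim\tilde p$ back into the annuli. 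A further small inaccuracy in the same part: in type III, ``nonzero projections of equal central support are equivalent'' requires countable decomposability, which the paper arranges by reducing to the case where $\Z(M)$ has a faithful normal state and taking the $f_n$ countably decomposable.
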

\begin{proof}
The implication (2)$\Rightarrow$(3) is clear.

(1)$\Rightarrow$(2) What follows is a reproduction of part of \cite[Proof of Lemma 2.2]{Mor2}.
By decomposing $M$ into a direct sum, we may assume that $M$ is of one of the three types I, II, or III.

First, we consider the case of type I and III. 
Then the assumption that $a\in LS(M)$ is not invertible implies that $r:= \bigwedge_{n\geq1} z(\chi_{(0, 1/n]}(a))\neq 0$. 
Considering the pair $(Mr, ar)$ instead of $(M, a)$, we may assume $\bigwedge_{n\geq1} z(\chi_{(0, 1/n]}(a))=1$. 
Take a normal (tracial) state $\tau$ on $\Z(M)$. 
We may also assume $\operatorname{supp}(\tau)=1 \in \P(\Z(M))$. 
By our assumptions, we may take a strictly decreasing sequence $(c_n)_{n\geq1}$ of positive real numbers that satisfies $c_n\to 0$ ($n\to \infty$) and $\tau(z(\chi_{(c_{n+1}, c_n]}(a))) \geq 1-3^{-n}$, $n\geq 1$. 
Then $\tau(\bigwedge_{n\geq1} z(\chi_{(c_{n+1}, c_n]}(a))) \geq 1-\sum_{n\geq 1}3^{-n}>0$ and thus $\bigwedge_{n\geq1} z(\chi_{(c_{n+1}, c_n]}(a)) \neq 0$. 
We may assume $\bigwedge_{n\geq1} z(\chi_{(c_{n+1}, c_n]}(a))=1$. 

By basic properties of projections, we obtain the following:
If $M$ is of type I, we can take an abelian projection $f_n\leq \chi_{(c_{n+1}, c_n]}(a)$ with $z(f_n)=1$ for each $n\geq 1$. 
If $M$ is of type III, by the assumption that $\Z(M)$ has a normal faithful state,  we can take a countably decomposable projection $f_n\leq \chi_{(c_{n+1}, c_n]}(a)$ with $z(f_n)=1$ for each $n\geq 1$. 
In both cases, using \cite[Corollary 6.3.5]{KR} and \cite[Proposition 6.4.6(iii)]{KR}, we see that $(f_n)_{n\geq1}$ is a family of mutually orthogonal equivalent nonzero projections satisfying $f_n\leq \chi_{(c_{n+1}, c_n]}(a)$ for each $n\geq 1$. 

Next, we consider the case where $M$ is of type II. 
For a projection $p\in \P(M)$, let $z_\infty(p)\in\P(\Z(M))$ denote the central support of the properly infinite part of $p$. 
Assume that $a\in LS(M)$ is not invertible in $LS(M)$. 
We may assume that $\bigwedge_{n\geq1} z_{\infty}(\chi_{(0, 1/n]}(a))=1$ without loss of generality. 
Then $M$ is of type II$_\infty$.
Take a normal semifinite faithful tracial weight $\tau$ on $M$ and a (finite) projection $p\in \P(M)$ with $\tau(p)=1$. 
It follows that $\chi_{(0, 1/n]}(a) \succ p$ for every $n\geq 1$.
By \cite[Lemma 2.3]{Mor2}, there exist a strictly decreasing sequence $(c_n)_{n\geq 1}$ of positive real numbers and a sequence $(p_n)_{n\geq 1}$ of projections in $M$ such that $c_n\to 0\,\,(n\to \infty)$, $p_n\leq \chi_{(c_{n+1}, c_n]}(a)$, $p_n\prec p$ and $\tau(p_n)\geq 1-3^{-n}$, $n\geq 1$. 
Take a projection $\tilde{p}_n\in \P(M)$ such that $p_n\sim \tilde{p}_n \leq p$, $n\geq 1$. 
Put $\tilde{p} := \bigwedge_{n\geq 1} \tilde{p}_n$. 
Then $\tau(\tilde{p}) = \tau(\bigwedge_{n\geq 1} \tilde{p}_n) \geq 1-\sum_{n\geq 1} 3^{-n}>0$. 
Hence $\tilde{p} \neq 0$.
Take a projection $f_n\in \P(M)$ such that $f_n\leq p_n$ and $f_n\sim\tilde{p}$, for $n\geq 1$. 
Then $(f_n)_{n\geq1}$ is a family of mutually orthogonal equivalent nonzero projections satisfying $f_n\leq \chi_{(c_{n+1}, c_n]}(a)$ for each $n\geq 1$.

(3)$\Rightarrow$(1) 
Assume that there is a sequence $f_n\in \P(M)$ of pairwise equivalent nonzero projections such that $\lVert f_naf_n\rVert\to 0$. 
We show that $a$ is not invertible in $LS(M)$. 
Define the function $F\colon [0, \infty)\to [0, 1]$ by $F(t)=\min\{t, 1\}$, $t\geq 0$.
Consider the bounded operator $F(a)$. 
Note that $F(a)\leq a$, hence $\lVert f_nF(a)f_n\rVert\leq \lVert f_naf_n\rVert$.
Moreover, it is easy to see that $a$ is invertible in $LS(M)$ if and only if $F(a)$ is invertible in $LS(M)$. 
Therefore, by considering $F(a)$ instead of $a$, we may assume that $a$ is a bounded operator. 

Assume for a contradiction that the bounded operator $a$ is invertible in $LS(M)$.  
Let $b$ be the inverse of $a$ in $LS(M)$.
Then there is an increasing sequence $r_m$ of central projections in $M$ such that $r_mb$ is measurable for each $m\geq 1$ and $r_m\nearrow 1$. 
We may take an integer $m\geq 1$ such that $r_mf_n\neq 0$ for some (or equivalently, every) $n\geq 1$. 
Let $\varepsilon >0$ be a positive real number. 
We may find $n\geq 1$ such that $\lVert f_naf_n \rVert\leq \varepsilon$.
It clearly follows that $f_n\wedge \chi_{(\varepsilon, \infty)}(a)=0$, which implies that $f_n\prec \chi_{(0, \varepsilon]}(a)$. 
(Here we imitated the argument in \cite[Proof of Proposition 2.2]{FK}.)
It follows that $0\neq r_m f_1\sim r_mf_n\prec \chi_{(0, \varepsilon]}(r_ma)$ for every positive real number $\varepsilon>0$. 
On the other hand, since $r_ma$ has an inverse $r_mb$ in $LS(r_mM)$ and $r_mb$ is measurable, we see that $\chi_{(0, c_0]}(r_ma)$ is a finite projection for some $c_0>0$.
We also have $\chi_{(0, c]}(r_ma)\to 0$ as $c\to 0$. 
Thus we arrive at a contradiction via the following statement: 
If a decreasing sequence of finite projections $p_n$ with $p_n\searrow 0$ and a projection $p$ in a von Neumann algebra $M$ satisfy $p\prec p_n$ for every $n$, then $p=0$. 
This statement can be obtained e.g.\ by considering the center-valued trace of $p$ in the finite von Neumann algebra $(p\vee p_1)M(p\vee p_1)$. Indeed, by the assumption the center-valued trace of $p$ is $0$, thus $p=0$.
\end{proof}

\begin{corollary}\label{LS}
Let $y\eta M$ be a positive self-adjoint operator. 
Then $y\notin LS(M)$ if and only if there are a strictly increasing sequence $c_n$ of positive real numbers and a sequence $f_n\in \P(M)$ of pairwise equivalent nonzero projections with the following properties: (i) $c_n\to\infty$ as $n\to\infty$. (ii) $f_n\leq \chi_{[c_n, c_{n+1})}(y)$. 
\end{corollary}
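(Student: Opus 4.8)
The plan is to reduce to the equivalence $(1)\Leftrightarrow(2)$ of Lemma \ref{inverse} by passing to the bounded transform $a:=(1+y)^{-1}$. First I would record that, since $y$ is a positive self-adjoint operator affiliated with $M$, the operator $a=(1+y)^{-1}$ is a bounded positive element of $M\subset LS(M)$ with $0\le a\le 1$, and that it is injective: indeed $\chi_{\{0\}}(a)=\chi_{g^{-1}(\{0\})}(y)=0$ because the function $g(t):=(1+t)^{-1}$ does not vanish on $[0,\infty)$. Thus $a$ is a positive injective element of $LS(M)$, so Lemma \ref{inverse} is applicable to it.

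Next I would observe that $a^{-1}=1+y$ as (possibly unbounded) operators, so $a$ is invertible in $LS(M)$ if and only if $1+y\in LS(M)$, which, since $LS(M)$ is a $^*$-algebra containing the unit, holds if and only if $y\in LS(M)$. Consequently $y\notin LS(M)$ is equivalent to statement $(1)$ of Lemma \ref{inverse} for $a$, which in turn is equivalent to statement $(2)$: there exist a strictly decreasing sequence $d_n$ of positive reals with $d_n\to 0$ and pairwise equivalent nonzero projections $f_n\in\P(M)$ with $f_n\le\chi_{(d_{n+1},d_n]}(a)$.

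It then remains to translate the spectral data of $a$ into spectral data of $y$, with the same projections $f_n$ carried over unchanged. The map $g(t)=(1+t)^{-1}$ is a strictly decreasing bijection of $[0,\infty)$ onto $(0,1]$ with inverse $g^{-1}(v)=v^{-1}-1$, and the functional calculus gives $\chi_{(d_{n+1},d_n]}(a)=\chi_{[g^{-1}(d_n),\,g^{-1}(d_{n+1}))}(y)$ for $0<d_{n+1}<d_n$. Setting $c_n:=g^{-1}(d_n)=d_n^{-1}-1$ turns the strictly decreasing sequence $d_n\searrow 0$ into a strictly increasing sequence $c_n\nearrow\infty$ and identifies $\chi_{(d_{n+1},d_n]}(a)$ with $\chi_{[c_n,c_{n+1})}(y)$, yielding $f_n\le\chi_{[c_n,c_{n+1})}(y)$; running the same substitution backwards via $d_n=g(c_n)$ handles the converse implication. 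The only delicate point is the bookkeeping of the half-open endpoints under the orientation-reversing map $g$ (so that the right-closed intervals for $a$ become left-closed intervals for $y$), which is the main, though entirely routine, obstacle; no further analysis is required.
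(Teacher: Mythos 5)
Your proposal is correct and is essentially the paper's own proof: the paper likewise applies Lemma \ref{inverse} (1)$\Leftrightarrow$(2) to $a:=(1+y)^{-1}$, and you have merely spelled out the routine spectral bookkeeping that the paper leaves implicit. (Only trivial addendum: after invoking the lemma, discard the finitely many indices with $d_n\geq 1$ so that every $c_n=d_n^{-1}-1$ is positive.)
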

\begin{proof}
Apply Lemma \ref{inverse} (1)$\Leftrightarrow$(2) to $a:= (1+y)^{-1}$, which is a positive injective (bounded) operator in $M$.
\end{proof}

\begin{corollary}\label{ortho}
Let $p, q\in \P(M)$ satisfy $p\wedge q=0$.
Then $p$ is not LS-orthogonal to $q$ if and only if there is a sequence $f_n\in\P(M)$ of pairwise equivalent nonzero subprojections of $p$
such that $\Vert f_n q^\perp f_n\rVert\to 0$.
\end{corollary}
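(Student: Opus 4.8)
The plan is to reduce the statement to Lemma~\ref{inverse}, applied to the positive operator $c:=pq^{\perp}p$ in the corner $pMp$. Put $p,q$ into the Halmos normal form of Subsection~\ref{halmos}. Since $p\wedge q=0$, the summand $(p\wedge q)H$ in~\eqref{h} disappears, and writing $p_0:=p\wedge q^{\perp}$ and letting $e_1\leq p$ be the projection onto the first coordinate of the generic part of~\eqref{h} (so that $p=p_0+e_1$), a direct computation from the matrix forms of $p$ and $q^{\perp}=1-q$ gives
\[
c=pq^{\perp}p=p_0+b^2,
\]
where $b^2\in e_1Me_1$ is viewed inside $pMp$ via $e_1\leq p$; thus $c$ acts as the identity on $p_0H$ and as $b^2$ on $e_1H$. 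This $c$ is positive and injective on $pH$: if $c\xi=0$ with $\xi\in pH$, then $\lVert q^{\perp}\xi\rVert^2=\langle c\xi,\xi\rangle=0$, so $\xi\in pH\cap qH=(p\wedge q)H=0$. Recall that $p$ is LS-orthogonal to $q$ precisely when $b$, equivalently $b^2$, is invertible in $LS(e_1Me_1)$.

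First I would note that $fq^{\perp}f=fcf$ for every projection $f\leq p$, so that $\lVert f_nq^{\perp}f_n\rVert=\lVert f_ncf_n\rVert$; moreover $c\leq p$ is bounded, so the boundedness clause of Lemma~\ref{inverse}(3) holds automatically. Applying the equivalence (1)$\Leftrightarrow$(3) of Lemma~\ref{inverse} to $c$ in the von Neumann algebra $pMp$ then shows that a sequence of pairwise equivalent nonzero projections $f_n\leq p$ with $\lVert f_nq^{\perp}f_n\rVert\to 0$ exists if and only if $c$ is not invertible in $LS(pMp)$. I would also record that Murray--von Neumann equivalence of subprojections of $p$ is computed identically in $pMp$ and in $M$---any implementing partial isometry $v$ satisfies $v=pvp$---so the word ``equivalent'' in the statement matches the equivalence furnished by the lemma.

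It then remains to match non-invertibility of $c$ in $LS(pMp)$ with non-invertibility of $b^2$ in $LS(e_1Me_1)$, which I expect to be the crux. The key observation is that the part of $c$ with small spectrum sits under $e_1$: for $0<s<1$ the block form yields $\chi_{(0,s]}(c)=\chi_{(0,s]}(b^2)\leq e_1$, because the $p_0$-block of $c$ has spectrum $\{1\}$. I would run the spectral characterization (1)$\Leftrightarrow$(2) of Lemma~\ref{inverse} in both algebras: discarding the finitely many scales with $c_n\geq 1$, every interval projection satisfies $\chi_{(c_{n+1},c_n]}(c)=\chi_{(c_{n+1},c_n]}(b^2)\leq e_1$, and---again because equivalence of subprojections of $e_1$ does not depend on whether it is read in $e_1Me_1$ or $pMp$---the witnessing families for condition~(2) pass back and forth between the two algebras. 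Hence $c$ is not invertible in $LS(pMp)$ if and only if $b^2$ is not invertible in $LS(e_1Me_1)$, i.e.\ if and only if $p$ is not LS-orthogonal to $q$. Combining the three equivalences proves the corollary. The genuine difficulty is this last transfer step---simultaneously matching the spectral projections near $0$ and the Murray--von Neumann equivalence relation across the two compressions---while the extraction of the block form of $c$ from the Halmos decomposition is routine.
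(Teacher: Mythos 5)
Your proof is correct and follows essentially the same route as the paper: both pass to the Halmos two-projection form, identify $pq^{\perp}p$ with $1\oplus b^2$ (your $p_0+b^2$) in the corner $pMp$, and apply Lemma \ref{inverse} (1)$\Leftrightarrow$(3) there to produce the sequence $(f_n)$. The only difference is that the paper dismisses the equivalence ``$1\oplus b^2$ invertible in $LS(pMp)$ $\Leftrightarrow$ $b$ invertible in $LS(e_1Me_1)$'' as clear, whereas you verify it by running the spectral characterization (1)$\Leftrightarrow$(2) in both corners; this is a careful filling-in of the same step, not a genuinely different argument.
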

\begin{proof}
Applying Subsection \ref{halmos} and using the same symbols as there, we may regard 
\[
p= 1\oplus 0\oplus 0\oplus 
\begin{pmatrix}
1&0\\
0&0
\end{pmatrix},\quad 
q= 0\oplus 1\oplus 0\oplus 
\begin{pmatrix}
a^2&ab \\
ab&b^2
\end{pmatrix}, 
\]
and thus 
\[
q^\perp 
= 1\oplus 0\oplus 1\oplus 
\begin{pmatrix}
b^2&-ab \\
-ab&a^2
\end{pmatrix}.
\]
Recall that $p$ is not LS-orthogonal to $q$ if and only if $b\in e_1Me_1$ is not invertible in $LS(e_1Me_1)$. 
Clearly, this is equivalent to the condition $1\oplus b^2\in (p\wedge q^\perp+e_1)M(p\wedge q^\perp+e_1)$ is not invertible in $LS((p\wedge q^\perp+e_1)M(p\wedge q^\perp+e_1))$.   
Now apply Lemma \ref{inverse} (1)$\Leftrightarrow$(3) to obtain the desired conclusion.
\end{proof}

\section{Proof of the main theorem}

\subsection{Reduction to a special case}\label{reduce}
First, let us recall a basic fact about a type II$_\infty$ von Neumann algebra. 
We give a sketch of a proof for completeness. 

\begin{lemma}\label{31}
Let $M$ be a type II$_\infty$ von Neumann algebra. 
Let $p\in \P(M)$ be a nonzero finite projection. 
Then there is a family $Z$ of pairwise orthogonal nonzero central projections with sum $z(p)$ satisfying the following properties: 
For each $r\in Z$, there is a family $(q_i)_{i\in I}$ of mutually orthogonal equivalent projections such that $rp\in \{q_i\mid i\in I\}$ and $\sum_{i\in I}q_i=r$. 
\end{lemma}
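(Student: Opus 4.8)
The plan is to reduce to the case of full central support and then realize $z(p)$ as an orthogonal sum of countably many copies of $p$, exploiting the absorption properties of properly infinite projections; this will in fact let me take $Z=\{z(p)\}$ to be a singleton family. First I would pass to the reduced algebra $Mz(p)$, which is again of type II$_\infty$. In $Mz(p)$ the projection $z(p)$ is the identity, it is properly infinite (every nonzero central projection of a properly infinite algebra is properly infinite), and $p$ acquires full central support. Since $rp=p$ for $r=z(p)$, it then suffices to write the identity of $Mz(p)$ as an orthogonal sum of projections each equivalent to $p$, one of which is $p$ itself.

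To build such a family I would proceed greedily. Put $q_1:=p$. At each finite stage the leftover $z(p)-\sum_{k\le n}q_k$ is obtained from the properly infinite projection $z(p)$ by removing the finite projection $\sum_{k\le n}q_k$, hence is again properly infinite with central support $z(p)$; because $p$ has central support $z(p)$, the comparison theory for properly infinite projections \cite{KR} gives $p\preceq z(p)-\sum_{k\le n}q_k$, so I may choose $q_{n+1}\sim p$ orthogonal to $q_1,\dots,q_n$. This yields a sequence $(q_n)_{n\ge1}$ of mutually orthogonal copies of $p$. Writing $f:=\sum_{n\ge1}q_n$ and $f':=\sum_{n\ge2}q_n=f-p$, both projections are properly infinite with central support $z(p)$, since on any nonzero central projection $c$ they dominate infinitely many nonzero copies of $cp$.

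The decisive step is to absorb the remainder $q:=z(p)-f$ into the tiling. As $q$ has central support under that of the properly infinite projection $f'$, one has $q\preceq f'$, and together with $f'\sim f'\oplus f'$ the Schr\"oder--Bernstein theorem for projections gives $f'\oplus q\sim f'$. Since $f'$ and $q$ are orthogonal with $f'+q=z(p)-p$, this is an equivalence $z(p)-p\sim f'$ implemented by a partial isometry $w$ with $w^*w=f'$ and $ww^*=z(p)-p$. Transporting the tiling of $f'$ through $w$, i.e.\ setting $q_n':=wq_nw^*$ for $n\ge2$, produces mutually orthogonal copies of $p$ below $z(p)-p$ with $\sum_{n\ge2}q_n'=wf'w^*=z(p)-p$. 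Hence $z(p)=p+\sum_{n\ge2}q_n'$ is the required orthogonal sum of copies of $p=rp$ containing $p$ as one summand, and $Z=\{z(p)\}$ completes the argument. I expect this absorption and transport step to be the main obstacle: the greedy process only fills a properly infinite subprojection $f$, leaving a genuine remainder $q$, and one must upgrade the mere equivalence $z(p)-p\sim f'$ into an honest orthogonal decomposition of $z(p)-p$ into copies of $p$ while retaining $p$ itself as a tile.
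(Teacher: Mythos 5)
You have a genuine gap, and it sits exactly where you predicted the main obstacle would be: the absorption step. The claim that $q:=z(p)-f$ satisfies $q\prec f'$ ``as $q$ has central support under that of the properly infinite projection $f'$'' is not a valid comparison principle. All results of this type carry a countable decomposability hypothesis on $q$ (compare \cite[Corollary 6.3.5]{KR}), and it cannot be dropped: already in $B(H)$ with $H$ nonseparable, a projection $f'$ of countably infinite rank is properly infinite with central support $1$, yet $1-f'\not\prec f'$. Your $q$ is exactly of this nature, because the greedy construction is intrinsically countable: $f$ and $f'$ are sums of countably many copies of the finite projection $p$, so the corner $f'Mf'$ can be $\sigma$-finite (i.e.\ countably decomposable) while $qMq$ is not, and then $q\not\prec f'$. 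Concretely, in $M=M_0\otimesbar B(\ell^2(\aleph_1))$, with $M_0$ a II$_1$ factor with separable predual and $p=1\otimes e_0$ for a rank-one $e_0$, any countable family $(q_n)_{n\geq 1}$ of mutually orthogonal copies of $p$ has $f=\sum_n q_n\sim 1\otimes g$ with $g$ of countable rank; then $fMf$ is $\sigma$-finite, hence $qMq$ is not (otherwise $M$ itself would be $\sigma$-finite), so $q\not\prec f'$ and the Schr\"oder--Bernstein absorption never gets started. (Your earlier steps are fine: the comparison of the \emph{finite} projection $p$ against a properly infinite projection with dominating central support does not need countability.)

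Moreover, this is not repairable: the strengthened conclusion $Z=\{z(p)\}$ that you aim for is false, which is precisely why the lemma is stated with a central decomposition. Take $M=\bigl(M_0\otimesbar B(\ell^2)\bigr)\oplus\bigl(M_0\otimesbar B(\ell^2(\aleph_1))\bigr)$ and $p=(1\otimes e_0)\oplus(1\otimes e_0')$ with $e_0,e_0'$ rank one, so that $z(p)=1$. If $(q_i)_{i\in I}$ were mutually orthogonal, all equivalent to $p$, with $\sum_{i\in I}q_i=1$, then cutting by the central projection $z_1$ of the first summand yields a tiling of the $\sigma$-finite algebra $M_0\otimesbar B(\ell^2)$ by the nonzero projections $q_iz_1$, forcing $I$ to be countable; but cutting by the central projection $z_2$ of the second summand then writes $z_2$ as a countable orthogonal sum of copies of $1\otimes e_0'$, i.e.\ $z_2\sim 1\otimes g$ with $g$ of countable rank, which is impossible since the corner of $1\otimes g$ is $\sigma$-finite while $M_0\otimesbar B(\ell^2(\aleph_1))$ is not. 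Hence no single-$r$ tiling exists and $\lvert Z\rvert\geq 2$ is forced here. The paper's proof is built around this obstruction: it takes a \emph{maximal} (Zorn) family of orthogonal copies of $p$, uses the comparison theorem to extract a nonzero central $r$ on which the leftover is dominated by $pr$, redistributes that small leftover among the infinitely many tiles by a shift (Hilbert-hotel) argument so that $r$ is tiled exactly by copies of $pr$, and then applies Zorn's lemma a second time, to families of central projections, to reach sum $z(p)$. For what it is worth, your argument does go through verbatim when $M$ is $\sigma$-finite, since then $q$ is automatically countably decomposable; but the lemma is needed, and is applied in the paper, without any such assumption.
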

\begin{proof}
Take a maximal family of mutually orthogonal projections $(p_i)_{i\in I}$ with $p_i\sim p$ for each $i\in I$ and $p\in \{p_i\mid i\in I\}$. 
By the comparison theorem, we may find a central projection $r\in \P(\Z(M))$ such that $(1-\bigvee_{i\in I}p_i)r\prec pr$ and $(1-\bigvee_{i\in I}p_i)r^\perp\succ pr^\perp$. 
The maximality of $(p_i)_{i\in I}$ implies that $r\neq 0$.  
Note that $I$ is an infinite set since $Mr$ is also a type II$_\infty$ von Neumann algebra.
Thus we may retake a family of mutually orthogonal equivalent projections $(q_i)_{i\in I}$ with the properties $rp\in \{q_i\mid i\in I\}$ and $\sum_{i\in I}q_i=r$. 
(This retaking can be done in the following manner: Let $i_0\in I$ be the element with $p_{i_0}=p$. Fix an element $i_0\neq j\in I$, and take a bijection $\sigma\colon I\setminus \{i_0, j\}\to I\setminus\{i_0\}$. For each $i\in I\setminus\{i_0, j\}$, we may take a projection $\check{p}_i\leq p_i$ with $\check{p}_ir\sim (1-\bigvee_{i\in I}p_i)r$ since $(1-\bigvee_{i\in I}p_i)r\prec pr\sim p_ir$. Then we may set $q_i=(p_i-\check{p}_i +\check{p}_{\sigma(i)})r$ for each $i\in I\setminus \{i_0, j\}$, $q_{i_0}=pr=rp$ and $q_j=(p_j-\check{p}_j+1-\bigvee_{i\in I}p_i)r$.)
Now an application of Zorn's lemma leads us to the desired conclusion.
\end{proof}

Let $M, N$ be von Neumann algebras of type II$_\infty$, and $\Psi\colon LS(M)\to LS(N)$ a ring isomorphism. 
The goal of this article is to show that $\Psi$ is similar to a real $^*$-isomorphism. 
The aim of this subsection is to reduce our task to a special case.
Note that $\Psi$ maps a central projection in $M$ to a central projection in $N$ because an idempotent in the center of $LS(M)$ is a projection.
First, we show the following claim. 
\begin{claim}\label{claim1}
There is a family of mutually orthogonal central projections $Z\subset \P(\Z(M))$ with $\sum_{r\in Z}r=1$ satisfying the following properties: 
For each $r\in Z$, there are von Neumann algebras $M_0, N_0$ of type II$_1$, Hilbert spaces $H, K$, $^*$-isomorphisms $\pi\colon rM\to M_0\otimesbar B(H)$,  $\rho\colon \Psi(r)N\to N_0\otimesbar B(K)$, and projections $e_0\in \P(B(H))$, $f_0\in \P(B(K))$ of rank one such that $l(\rho\circ \Psi\circ \pi^{-1}(1\otimes e_0))=1\otimes f_0$.
\end{claim}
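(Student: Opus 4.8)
The plan is to build the two tensor-product decompositions from the structure theory of type II$_\infty$ algebras (packaged in Lemma \ref{31}) and to match the rank-one pieces by transporting a single finite projection through the lattice isomorphism induced by $\Psi$.

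First I would record the properties of the lattice isomorphism $\Phi\colon\P(M)\to\P(N)$ determined by $\Phi(l(x))=l(\Psi(x))$ (see \cite[Proposition 3.1]{M}). Since $\Psi$ carries the center of $LS(M)$ onto the center of $LS(N)$ and central idempotents are projections, $\Phi$ restricts to a Boolean isomorphism $\P(\Z(M))\to\P(\Z(N))$ with $\Phi(r)=\Psi(r)$ for central $r$; consequently $\Phi$ preserves central supports, i.e.\ $z(\Phi(p))=\Phi(z(p))$. Moreover $\Phi$ preserves Murray--von Neumann equivalence: if $p\sim q$ via $v\in M$, then $\Psi(v^*)\Psi(v)=\Psi(p)$ and $\Psi(v)\Psi(v^*)=\Psi(q)$ exhibit $\Psi(p),\Psi(q)$ as algebraically equivalent idempotents in $LS(N)$, whence $\Phi(p)=l(\Psi(p))\sim l(\Psi(q))=\Phi(q)$; applying the same to $\Phi^{-1}$ shows $\Phi$ preserves finiteness as well. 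I would then fix a nonzero finite projection $e\in\P(M)$ with $z(e)=1$ (such $e$ exists by a routine maximality argument, since every nonzero central projection dominates a nonzero finite projection in the type II$_\infty$ setting), and put $g:=\Phi(e)=l(\Psi(e))\in\P(N)$. By the previous remarks $g$ is finite and $z(g)=\Phi(z(e))=1$.

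The main point is that $\Phi$ need not preserve orthogonality of projections (orthogonal idempotents generally have non-orthogonal left supports), so I cannot simply transport an orthogonal decomposition of $1_M$ over to $N$. Instead I would apply Lemma \ref{31} twice, independently. Applied to $e\in\P(M)$ it yields a family $Z_1\subset\P(\Z(M))$ summing to $1$ such that over each $r\in Z_1$ the projection $re$ sits inside a family of mutually orthogonal equivalent projections with sum $r$. Applied to $g\in\P(N)$ it yields a family $Z_2'\subset\P(\Z(N))$ summing to $1$ with the analogous property for $g$; pulling back, $Z_2:=\{\Phi^{-1}(s)\mid s\in Z_2'\}$ is a family of orthogonal central projections of $M$ with sum $1$ (here I use that $\Phi^{-1}$ preserves meets, hence orthogonality of central projections). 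I would then let $Z$ be the common refinement $\{r_1\wedge r_2\mid r_1\in Z_1,\ r_2\in Z_2\}\setminus\{0\}$. The key observation making this work is that the block structure produced by Lemma \ref{31} is inherited under compression by a smaller central projection: multiplying an orthogonal equivalent family by a central projection preserves both orthogonality and equivalence. Hence for each $r\in Z$ the projection $re$ lies in an orthogonal equivalent family summing to $r$ in $rM$, and simultaneously $sg$ lies in one summing to $s:=\Phi(r)=\Psi(r)$ in $sN=\Psi(r)N$.

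Finally, for each $r\in Z$ I would turn these block structures into the required $^*$-isomorphisms. Choosing partial isometries implementing the equivalences gives matrix units identifying $rM\cong M_0\otimesbar B(H)$ with $M_0:=reMre$ of type II$_1$, $H:=\ell^2(I)$ over the relevant index set, and $\pi(re)=1\otimes e_0$ for a rank-one $e_0$; symmetrically one obtains $\rho\colon\Psi(r)N\to N_0\otimesbar B(K)$ with $N_0:=sg\,N\,sg$ of type II$_1$, $K:=\ell^2(J)$, and $\rho(sg)=1\otimes f_0$ for a rank-one $f_0$. Since $\pi^{-1}(1\otimes e_0)=re$ and $r$ is central so that $r\wedge e=re$, we have $l(\Psi(re))=\Phi(re)=\Phi(r)\wedge\Phi(e)=sg$, and because $\rho$ is a $^*$-isomorphism it commutes with taking left supports; therefore $l(\rho\circ\Psi\circ\pi^{-1}(1\otimes e_0))=\rho(l(\Psi(re)))=\rho(sg)=1\otimes f_0$, which is exactly the asserted identity. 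I expect the only genuinely delicate points to be the preservation of equivalence, finiteness, and central support under $\Phi$, and the passage to the common refinement forced by the failure of orthogonality-preservation.
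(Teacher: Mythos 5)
Your proposal is correct and takes essentially the same route as the paper: fix a finite projection with central support $1$, apply Lemma \ref{31} both to it and to its image under $\Phi$, pass to the common refinement of the two central decompositions (your $r_1\wedge r_2$ is the paper's $z\Psi^{-1}(w)$), and conclude with the same left-support computation $l(\rho\circ\Psi\circ\pi^{-1}(1\otimes e_0))=\rho(\Phi(pr))=1\otimes f_0$. The only difference is expository: you spell out the preservation of equivalence, finiteness, and central supports under $\Phi$ and the stability of the block structure under central compression, points the paper handles by citing \cite[Proof of Lemma 4.1]{M} or leaves implicit.
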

\begin{proof}
Recall that the lattice isomorphism $\Phi\colon \P(M)\to\P(N)$ determined by $\Phi(p)=l(\Psi(p))$, $p\in \P(M)$, maps a finite projection to a finite projection (see \cite[Proof of Lemma 4.1]{M}). 
Fix a finite projection $p\in \P(M)$ with $z(p)=1$. 
Applying the preceding lemma to $p\in \P(M)$ and $\Phi(p)\in \P(N)$, we obtain families $Z_1\subset \P(\Z(M))$, $Z_2\subset \P(\Z(N))$ of mutually orthogonal central projections with sum $1$ in $M$ and $N$, respectively. 
Set $Z=\{z\Psi^{-1}(w)\mid z\in Z_1, w\in Z_2\}$.
For each element $r=z\Psi^{-1}(w)\in Z$ with $z\in Z_1$, $w\in Z_2$, we see that $\Psi$ restricts to a ring isomorphism from $rLS(M)=LS(rM)$ onto $\Psi(r)LS(N)=LS(\Psi(r)N)$. 
Observe that $M_0:= prMp$ and $N_0:= \Phi(p)\Psi(r)N\Phi(p)$ are von Neumann algebras of type II$_1$. 
By Lemma \ref{31}, we may take $^*$-isomorphisms $\pi\colon rM\to M_0\otimesbar B(H)$,  $\rho\colon \Psi(r)N\to N_0\otimesbar B(K)$ such that $\pi(pr)=1\otimes e_0$, $\rho(\Phi(p)\Psi(r))=1\otimes f_0$ for some projections $e_0\in \P(B(H))$, $f_0\in \P(B(K))$ of rank one. Then we have 
\[
l(\rho\circ \Psi\circ \pi^{-1}(1\otimes e_0))= l(\rho\circ \Psi(pr))=\rho(l(\Psi(pr)))=\rho(\Phi(pr)).
\]
Since $r$ is central we see that $\Phi(pr)=\Phi(p)\Psi(r)$, and we obtain $\rho(\Phi(pr))=\rho(\Phi(p)\Psi(r))=1\otimes f_0$.
\end{proof}

In order to show Theorem \ref{main}, it suffices to show that the restriction of $\Psi$ to $rLS(M)=LS(rM)$ is similar to a real $^*$-isomorphism for each $r\in Z$ in Claim \ref{claim1}.
Therefore, refreshing the notations, we just need to consider the following situation. 
Assume that $M_0, N_0$ are von Neumann algebras of type II$_1$, $H, K$ are infinite-dimensional Hilbert spaces, and that $e_0\in \P(B(H))$, $f_0\in \P(B(K))$ are projections of rank one. 
Assume further that $\Psi\colon LS(M) \to LS(N)$ is a ring isomorphism satisfying $l(\Psi(p_0))=q_0$, where $M:= M_0\otimesbar B(H)$, $N_0:=N_0\otimesbar B(K)$, and $p_0:=1\otimes e_0$, $q_0:=1\otimes f_0$. 
Under this setting, we have the following claim. 

\begin{claim}\label{claim2}
There is a mapping $\psi\colon LS(N_0\otimesbar B(K))\to LS(M_0\otimesbar B(K))$ that is similar to a real $^*$-isomorphism with $\psi\circ\Psi(x\otimes e_0)=x\otimes f_0$ for every $x\in LS(M_0)$.
\end{claim}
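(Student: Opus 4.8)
The plan is to reduce Claim \ref{claim2} to the type II$_1$ theorem of Ayupov--Kudaybergenov \cite[Theorem 1.4]{AK} by cutting down to the corners determined by $p_0$ and $q_0$, and then to transport the resulting real $^*$-isomorphism back up to the whole algebra by tensoring with $B(K)$.

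First I would normalize the idempotent $\Psi(p_0)$. Since $p_0^2=p_0$, the element $f:=\Psi(p_0)\in LS(N)$ is an idempotent with $l(f)=q_0$. It is standard that such an idempotent is similar, through an invertible element $s\in LS(N)$, to its left support projection $q_0=l(f)$; concretely one can build $s$ from $f$ and the positive invertible element $1-(f-f^*)^2\geq 1$. Thus $\mathrm{Ad}(s)\circ\Psi$ is a ring isomorphism, similar to $\Psi$, with $(\mathrm{Ad}(s)\circ\Psi)(p_0)=q_0$; the inner automorphism $\mathrm{Ad}(s)$ will be folded into the final $\psi$, so henceforth I may argue as if $\Psi(p_0)=q_0$.

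Next I pass to corners. As $e_0,f_0$ are rank one, $p_0Mp_0=M_0\otimesbar\C e_0\cong M_0$ and $q_0Nq_0\cong N_0$ are of type II$_1$, and reduction by a projection gives $p_0LS(M)p_0=LS(p_0Mp_0)$ and $q_0LS(N)q_0=LS(q_0Nq_0)$. Under the $^*$-isomorphisms $x\mapsto x\otimes e_0$ and $y\mapsto y\otimes f_0$, the restriction of $\Psi$ to $p_0LS(M)p_0$ (which maps onto $q_0LS(N)q_0$, since $\Psi(p_0)=q_0$) becomes a ring isomorphism $\Psi_0\colon LS(M_0)\to LS(N_0)$ determined by $\Psi(x\otimes e_0)=\Psi_0(x)\otimes f_0$. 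Now \cite[Theorem 1.4]{AK} applies: there exist a real $^*$-isomorphism $\theta_0\colon M_0\to N_0$ (extended to $LS(M_0)\to LS(N_0)$) and an invertible $d\in LS(N_0)$ with $\Psi_0(x)=d\,\theta_0(x)\,d^{-1}$.

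Finally I build $\psi$. Decomposing over the center, I may assume $\theta_0$ is either complex-linear or conjugate-linear, and in each case I choose a $^*$-automorphism $\sigma$ of $B(K)$ of the same linearity type with $\sigma(f_0)=f_0$ (the identity in the complex-linear case; entrywise complex conjugation in an orthonormal basis of $K$ whose first vector spans $f_0K$ in the conjugate-linear case). Then $\Theta:=\theta_0\otimes\sigma\colon M_0\otimesbar B(K)\to N_0\otimesbar B(K)$ is a real $^*$-isomorphism, extending to $LS$. Putting $c:=\theta_0^{-1}(d)\in LS(M_0)$ and $W:=c\otimes 1\in LS(M_0\otimesbar B(K))$, which is invertible with inverse $c^{-1}\otimes 1$, I define
\[
\psi:=\mathrm{Ad}(W^{-1})\circ\Theta^{-1}\circ\mathrm{Ad}(s)\colon LS(N_0\otimesbar B(K))\to LS(M_0\otimesbar B(K)).
\]
Since $\Theta^{-1}(\Psi_0(x)\otimes f_0)=\theta_0^{-1}(d\,\theta_0(x)\,d^{-1})\otimes f_0=(cxc^{-1})\otimes f_0$, a direct computation gives $\psi(\Psi(x\otimes e_0))=W^{-1}\big((cxc^{-1})\otimes f_0\big)W=x\otimes f_0$, as required. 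As $\Theta^{-1}$ is a real $^*$-isomorphism while $\mathrm{Ad}(s)$ and $\mathrm{Ad}(W^{-1})$ are inner, $\psi$ is a composition of maps each similar to a real $^*$-isomorphism, hence itself similar to a real $^*$-isomorphism. The only external input is the type II$_1$ theorem \cite[Theorem 1.4]{AK}; the points demanding care are the corner identification $p_0LS(M)p_0\cong LS(M_0)$, the invertibility of $c\otimes 1$ in $LS(M_0\otimesbar B(K))$, and—the sole structural subtlety—matching $f_0$ when $\theta_0$ is conjugate-linear, which is arranged by the choice of $\sigma$.
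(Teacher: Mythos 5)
Your strategy is the same as the paper's: normalize the idempotent $\Psi(p_0)$ to $q_0$ by an inner conjugation, cut to the corners $p_0LS(M)p_0\cong LS(M_0)$ and $q_0LS(N)q_0\cong LS(N_0)$, invoke \cite[Theorem 1.4]{AK} to write the corner map as $\mathrm{Ad}(d)\circ\theta_0$, extend $\theta_0$ over the tensor factor $B(K)$, and finally undo $\mathrm{Ad}(d)$ by one more inner conjugation. Your handling of the conjugate-linear summand (choosing $\sigma$ with $\sigma(f_0)=f_0$) is, if anything, more careful than the paper's formula $\psi_1(a\otimes b)=\psi_0(a)\otimes b$, which is not literally well defined on a conjugate-linear summand. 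However, there is one genuine gap, exactly at the point you flag only as ``demanding care'': the element $W=c\otimes 1$, with $c=\theta_0^{-1}(d)\in LS(M_0)$, need not belong to $LS(M_0\otimesbar B(K))$ at all. Consequently $\mathrm{Ad}(W^{-1})$ is not an inner conjugation of that algebra, and your $\psi$ is neither well defined on $LS(M_0\otimesbar B(K))$ nor ``similar to a real $^*$-isomorphism'' in the required sense, since the implementing element must be invertible \emph{in} $LS(M_0\otimesbar B(K))$.

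The point is that $d$ (hence $c$) is only an invertible element of the locally measurable algebra of a type II$_1$ algebra, so it may be unbounded, and tensoring an unbounded operator with $1\in B(K)$ destroys local measurability. For instance, if $M_0$ is a II$_1$ factor and $c=\sum_{n\geq 1}n\,p_n$ with $(p_n)$ orthogonal projections summing to $1$, then $c$ is invertible in $LS(M_0)$ (its inverse is bounded), but $\chi_{(t,\infty)}(\lvert c\otimes 1\rvert)=\chi_{(t,\infty)}(\lvert c\rvert)\otimes 1$ is a nonzero, properly infinite projection with full central support for every $t$; by the characterization of local measurability in Subsection \ref{LSdef}, $c\otimes 1$ is affiliated with $M_0\otimesbar B(K)$ but not locally measurable. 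The repair is precisely the device in the paper's proof (the element there denoted $y_{00}$): conjugate instead by $W':=c\otimes f_0+1\otimes f_0^{\perp}$, which is supported over the \emph{finite} corner $1\otimes f_0$, so that $\chi_{(t,\infty)}(\lvert W'\rvert)\leq 1\otimes f_0$ for $t\geq 1$; hence $W'$ is measurable and invertible in $LS(M_0\otimesbar B(K))$ with inverse $c^{-1}\otimes f_0+1\otimes f_0^{\perp}$. Since $\Theta^{-1}\bigl(\mathrm{Ad}(s)(\Psi(x\otimes e_0))\bigr)=(cxc^{-1})\otimes f_0$ also lives in that corner, the same computation gives $\mathrm{Ad}(W'^{-1})\bigl((cxc^{-1})\otimes f_0\bigr)=x\otimes f_0$. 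With $W$ replaced by $W'$ your argument closes the gap and then coincides with the paper's proof.
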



\begin{proof}

We see that the operator $\Psi(p_0)$ is an idempotent with $l(\Psi(p_0)) =q_0$. 
Thus we may take an invertible operator $y_1\in LS(N)$ such that $y_1\Psi(p_0)y_1^{-1}=q_0$. 
(Indeed, we may set $y_1:= \Psi(p_0)+q_0^\perp$. 
Note that $0=\Psi(p_0^\perp p_0)=\Psi(p_0^\perp)\Psi(p_0)$, which implies $0=\Psi(p_0^\perp)l(\Psi(p_0))=(1-\Psi(p_0))q_0$. 
Using this equality, we may easily obtain $y_1^{-1}=-\Psi(p_0)+1+q_0$ and $y_1\Psi(p_0)y_1^{-1}=q_0$.) 
Then the mapping $\Psi_1=y_1\Psi(\cdot)y_1^{-1}$ satisfies $\Psi_1(p_0)=q_0$. 

From here we apply the result by Ayupov and Kudaybergenov. 
Observe that now $\Psi_1$ restricts to a ring isomorphism from $p_0LS(M)p_0$ onto $q_0LS(N)q_0$. 
Let us identify $p_0LS(M)p_0$ and $q_0LS(N)q_0$ with $LS(M_0)$ and $LS(N_0)$, respectively, in a natural manner. 
By \cite[Theorem 1.4]{AK}, we may find  a real $^*$-isomorphism $\psi_0\colon M_0\to N_0$ (which also determines a real $^*$-isomorphism from $LS(M_0)$ onto $LS(N_0)$) and an invertible operator $y_0\in LS(N_0)$ such that $\Psi_1(x)=y_0\psi_0(x)y_0^{-1}$ for every $x\in LS(M_0)$. 
Note that $\psi_0\colon M_0\to N_0$ determines a real $^*$-isomorphism $\psi_1\colon M_0\otimesbar B(K)\to N_0\otimesbar B(K)$ such that $\psi_1(a\otimes b)=\psi_0(a)\otimes b$ for any $a\in M_0$, $b\in B(K)$ (which in turn determines a real $^*$-isomorphism from $LS(M_0\otimesbar B(K))$ onto $LS(N_0\otimesbar B(K))$). 
Put $y_{00} := y_0\otimes e_0 + 1\otimes e_0^{\perp}\in LS(M_0\otimesbar B(K))$. 
Then the mapping $LS(M_0\otimesbar B(H))\ni x \mapsto \psi_1^{-1}(y_{00}^{-1}\Psi_1(x)y_{00})\in LS(M_0\otimesbar B(K))$ sends $x\otimes e_0$ to $x\otimes f_0$ for every $x\in LS(M_0)$, so the claim is settled.
\end{proof}
Then our task reduces to considering $\psi\circ\Psi$ instead of $\Psi$ in Claim \ref{claim2}.
Let us again refresh the notations. 
We need to consider a von Neumann algebra $M_0$ of type II$_1$, infinite-dimensional Hilbert spaces $H, K$, projections $e_0\in \P(B(H))$, $f_0\in \P(B(K))$ of rank one, and a ring isomorphism $\Psi\colon LS(M_0\otimesbar B(H))\to LS(M_0\otimesbar B(K))$ satisfying $\Psi(x\otimes e_0)=x\otimes f_0$ for every $x\in LS(M_0)$.
By considering $\Psi^{-1}$ instead of $\Psi$ if necessary, we may assume that $\dim K\leq \dim H$. 
Fix a linear isometry $w$ from $K$ into $H$ with $wf_0w^*=e_0$. 
By the embedding $x\mapsto (1\otimes w)x(1\otimes w^*)$, we may identify $LS(M_0\otimesbar B(K))$ with $LS(M_0\otimesbar B(wK))$, which may be regarded as a subalgebra of $LS(M_0\otimesbar B(H))$. 
Note that $(1\otimes w)q_0(1\otimes w^*)=p_0$. 
To consider Theorem \ref{main}, it suffices to work with the map $LS(M_0\otimesbar B(H))\ni x\mapsto (1\otimes w)\Psi(x)(1\otimes w^*)\in LS(M_0\otimesbar B(wK))$ instead of $\Psi$. 

\color{black}

\subsection{Proof of the main theorem}
By the preceding subsection, to verify Theorem \ref{main}, we just need to prove the following statement. We refresh our notations once again.
\begin{proposition}\label{special}
Let $L$ be a Hilbert space and $M_0\subset B(L)$ a von Neumann algebra of type II$_1$. 
Let $H$ be an infinite-dimensional Hilbert space, $K\subset H$ an infinite-dimensional closed subspace, and let $e_0\in \P(B(K))$ be a rank-one projection. 
Set $M:= M_0\otimesbar B(H)$ and $N:=M_0\otimesbar B(K)$.
Put $p_0:=1\otimes e_0\in \P(N) (\subset \P(M))$. 
Let $\Psi\colon LS(M)\to LS(N)$ be a ring isomorphism such that $\Psi(x\otimes e_0)=x\otimes e_0$ for each $x\in LS(M_0)$. 
Then $\Psi$ is similar to a real $^*$-isomorphism.
\end{proposition}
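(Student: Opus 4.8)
The plan is to recover $\Psi$ from its action on a system of matrix units, and then to straighten the resulting ring-theoretic matrix units into genuine self-adjoint ones by a single conjugation; the hard part will be proving that the conjugating element is invertible inside $LS(N)$.

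First I would fix a matrix-unit skeleton. Choose an orthonormal basis of $H$ indexed by a set $I$ with a distinguished index $i_0$ whose basis vector spans the range of $e_0$, and pick partial isometries $u_i\in B(H)$ with $u_{i_0}=e_0$, $u_i^*u_i=e_0$ and $\sum_i u_iu_i^*=1_H$. Setting $V_i:=1\otimes u_i\in M$ gives a system of matrix units $E_{ij}:=V_iV_j^*$ with $V_{i_0}=p_0$, $V_i^*V_j=\delta_{ij}p_0$, $\sum_iV_iV_i^*=1$, and $V_i=E_{ii_0}$. Writing $a_i:=\Psi(V_i)$ and $b_i:=\Psi(V_i^*)$, the facts $\Psi(p_0)=p_0$ and that $\Psi$ is a ring isomorphism yield $b_ia_j=\delta_{ij}p_0$, that $\{a_ib_i\}_i$ is an orthogonal family of idempotents with $\sum_ia_ib_i=1$, and, since $\Psi$ fixes the corner $p_0LS(M)p_0=LS(M_0)$, the formula $\Psi(V_i(x\otimes e_0)V_j^*)=a_i(x\otimes e_0)b_j$ for $x\in LS(M_0)$. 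Thus $\Psi$ is completely described by the families $(a_i),(b_i)$ on the subalgebra generated by the corner and the $V_i$.

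Next I would fix target matrix units and the candidate intertwiner. The idempotents $a_ib_i=\Psi(V_iV_i^*)$ are mutually orthogonal in $LS(N)$, sum to $1$, and are each algebraically equivalent to $p_0$; a convergence and counting argument then forces $\dim K=\dim H$, so that $N=M_0\otimesbar B(K)$ admits partial isometries $g_i\in N$ indexed by the same set $I$ with $g_{i_0}=p_0$, $g_i^*g_i=p_0$, and $\{g_ig_i^*\}_i$ mutually orthogonal with $\sum_ig_ig_i^*=1$. Following the standard comparison of two matrix-unit systems sharing the base corner $E_{i_0i_0}=p_0=g_{i_0}g_{i_0}^*$, I set $y:=\sum_ig_ib_i$, with formal inverse $\sum_ia_ig_i^*$. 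At the level of finite partial sums the matrix-unit relations give $y\,\Psi(E_{kl})=(g_kg_l^*)\,y$ and $(\sum_ig_ib_i)(\sum_ja_jg_j^*)=\sum_ig_ip_0g_i^*=1$, so that once $y$ is known to be invertible, $\Psi_1:=y\Psi(\cdot)y^{-1}$ sends $V_k\mapsto g_k$ and fixes the corner pointwise.

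The main obstacle is to show that $y$ genuinely defines an element of $LS(N)$ and is invertible there, since $y$ and its putative inverse are infinite sums of unbounded operators; this is precisely the delicate analysis of unbounded operators announced in the introduction. I would analyse the positive operator $y^*y$ and show that it avoids both degeneracies detected by the preliminary lemmas: that it belongs to $LS(N)$, by excluding the escape-to-infinity sequences of Corollary~\ref{LS}, and that it is invertible in $LS(N)$, by excluding the sequences $f_n$ of mutually equivalent nonzero projections with $\lVert f_n(y^*y)f_n\rVert\to0$ supplied by Lemma~\ref{inverse}\,(1)$\Leftrightarrow$(3). The bridge between these sequence conditions and the geometry of the supports built from the idempotents $a_ib_i$ is the LS-orthogonality criterion of Corollary~\ref{ortho}; the key point is that, because the $a_ib_i$ arise from an honest ring isomorphism and not from an arbitrary idempotent family, such bad sequences cannot occur. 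Once $y$ is invertible in $LS(N)$, the ring isomorphism $\Psi_1=y\Psi(\cdot)y^{-1}$ carries the matrix-unit system $\{V_iV_j^*\}$ of $M$ onto the self-adjoint system $\{g_ig_j^*\}$ of $N$ and restricts to the identity on the corner $LS(M_0)$, so expanding a general element through the matrix units and invoking $^*$-preservation on the corner shows, by the arguments of \cite{M,AK}, that $\Psi_1$ is real-linear and $^*$-preserving, i.e.\ a real $^*$-isomorphism. Hence $\Psi=y^{-1}\Psi_1(\cdot)y$ is similar to a real $^*$-isomorphism, which proves Proposition~\ref{special} and therefore Theorem~\ref{main}.
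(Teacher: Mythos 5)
Your skeleton is, at bottom, the same as the paper's: your intertwiner $y=\sum_i g_i\Psi(V_i^*)$ is (up to inversion and the identification of $N$ with a corner of $M$) exactly the operator the paper builds from the finite sums $y_p=\Psi(v_1)v_1^*+\cdots+\Psi(v_n)v_n^*$, $p=p_1+\cdots+p_n$ with $p_i\sim p_0$, followed by a limiting procedure and a polar decomposition. The first genuine gap is that everything after ``I set $y:=\sum_i g_ib_i$'' treats an infinite (possibly uncountable) sum of unbounded operators as if it were already an element of an algebra. A ring isomorphism carries no topology, so neither ``$\sum_i a_ib_i=1$'' nor the formal identity $(\sum_i g_ib_i)(\sum_j a_jg_j^*)=1$ has a meaning as stated, and supplying one is not cosmetic: the paper must (i) prove the finite sums $y_p$ are well defined and compatible as $p$ varies --- this is precisely where the hypothesis $\Psi(x\otimes e_0)=x\otimes e_0$ enters, via $v_i^*v_j'\in p_0Mp_0$ --- and (ii) prove that the resulting densely defined operator is \emph{closable}, a nontrivial step whose proof uses that the lattice isomorphism $\Phi$ preserves arbitrary infima, $\bigwedge_{j}\Phi\bigl(\bigvee_{l\geq j+1}p_l\bigr)=0$. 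Your outline contains no substitute for either step.

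The second and larger gap is the assertion that membership of $y$ in $LS(N)$ and its invertibility hold ``because the $a_ib_i$ arise from an honest ring isomorphism\ldots such bad sequences cannot occur.'' That sentence is the entire difficulty of the theorem. Lemma \ref{inverse} and Corollaries \ref{LS}, \ref{ortho} only \emph{translate} failure of membership or invertibility into the existence of sequences $(f_n)$ of equivalent nonzero projections trapped in spectral intervals; one still has to derive a contradiction from such a sequence. The paper's mechanism is to build from the $f_n$ the tilted projections $g_k=(f_{n_{2k}}+v_k+v_k^*+f_{n_{2k+1}})/2$ and $e_k=f_{n_{2k+1}}$, whose suprema $g,e$ are LS-orthogonal in $M$, to use the spectral gaps $c_{n_{2k+1}}/c_{n_{2k}+1}\to\infty$ to show $\lVert\Phi(g_k)\Phi(e_k)^\perp\Phi(g_k)\rVert\to 0$, and then to contradict the fact that lattice isomorphisms preserve LS-orthogonality, \cite[Lemma 3.6]{M} --- a result your outline never invokes and which is the actual bridge from ``$\Psi$ is a ring isomorphism'' to ``bad sequences cannot occur.'' Two smaller gaps: $\dim K=\dim H$ is asserted via an unspecified ``convergence and counting argument,'' whereas in the paper the corresponding fact ($v^*v=1$, $vv^*=1\otimes p_K$) is an \emph{output} of the construction, not an input; and your final step, recovering $\Psi_1$ on all of $LS(M)$ by ``expanding a general element through the matrix units,'' again needs infinite sums that ring isomorphisms need not respect --- the paper instead checks that $\Psi$ and $x\mapsto v\lvert\hat{y}\rvert x\lvert\hat{y}\rvert^{-1}v^*$ induce the same lattice isomorphism (it suffices to check on $p\prec p_0$, since every projection is a supremum of such) and then invokes the uniqueness statement of Theorem \ref{corr}.
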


From now on we give a proof of Proposition \ref{special}. Let $\Phi\colon \P(M)\to \P(N)$ denote the lattice isomorphism associated with $\Psi$.
Namely, $\Phi\colon \P(M)\to \P(N)$ is the lattice isomorphism that is determined by the formula $\Phi(p)=l(\Psi(p))$, $p\in \P(M)$ (see \cite[Proposition 3.1]{M}).
We define an unbounded operator $\check{y}$ on the Hilbert space $L\otimes H$. 
We will show that it is closable and the closure brings us to the solution of our task.

Let $\P(M)_0$ denote the collection of all projections $p\in \P(M)$ with $p\sim p_0$. 
Let $\P(M)_1$ denote the collection of all projections $p\in \P(M)$ such that there are  $n\geq 1$ and mutually orthogonal projections $p_1, \ldots, p_n\in \P(M)_0$ with $p=p_1+\cdots+p_n$. 
To define $\check{y}$, we first construct a family of operators $(y_p)_{p\in \P(M)_1}\subset LS(M)$. 

Let $p=p_1+\cdots+p_n\in \P(M)_1$ with $p_1, \ldots, p_n\in \P(M)_0$. 
For each $i\in \{1, \ldots, n\}$, take a partial isometry $v_i\in M$ such that $v_i^*v_i=p_0$, $v_iv_i^*=p_i$. 
Because $\Psi$ is a ring isomorphism, we have $r(\Psi(x_1))\leq r(\Psi(x_2))$ for every pair $x_1, x_2\in LS(M)$ with $r(x_1)\leq r(x_2)$. (To show this, imitate the proof of \cite[Proposition 3.1]{M}.) It follows that $\Psi(v_i)\in LS(N)$ is an operator with $r(\Psi(v_i))=r(\Psi(p_0))=  p_0\in N\subset M$. 
Thus $\Psi(v_i)v_i^*\in LS(M)$ satisfies $r(\Psi(v_i)v_i^*)=p_i$. 
Put $y_p := \Psi(v_1)v_1^*+\cdots+\Psi(v_n)v_n^*\in LS(M)$.
\begin{claim}
The operator $y_p\in LS(M)$ is well-defined, and satisfies $r(y_p)=p, l(y_p)=\Phi(p)$ and $y_{p+q}=y_p+y_q$ for any pair $p, q\in \P(M)_1$ with $q\perp p$. 
\end{claim}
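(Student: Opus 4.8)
The plan is to derive all four assertions from a single characterization of $y_p$, namely that $y_p$ reproduces $\Psi$ on the relevant partial isometries. The conceptual input is that $\Psi$ restricts to the identity on the corner $p_0LS(M)p_0=\{x\otimes e_0:x\in M_0\}$, by the standing hypothesis $\Psi(x\otimes e_0)=x\otimes e_0$.

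First I would record the key identity: for every partial isometry $w\in M$ with $w^*w=p_0$ and $ww^*\leq p$, one has $y_pw=\Psi(w)$. Indeed $v_k^*w$ lies in $p_0Mp_0$, where $\Psi$ acts as the identity, so $\Psi(v_k)\,v_k^*w=\Psi(v_k)\Psi(v_k^*w)=\Psi(v_kv_k^*w)=\Psi(p_kw)$; summing over $k$ and using $ww^*\leq p$ (so $pw=w$) gives $y_pw=\Psi(\sum_k p_kw)=\Psi(pw)=\Psi(w)$. In particular $y_pv_j=\Psi(v_j)$ for each $j$.

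Next come the supports and well-definedness. Since $\Psi(v_k)v_k^*=\Psi(v_k)v_k^*p_k$, the right support satisfies $r(y_p)\leq\sum_k p_k=p$. Well-definedness is then immediate: if $\tilde y_p$ is built from another admissible decomposition, the identity applied to the $v_j$ of the first decomposition forces $(y_p-\tilde y_p)v_j=0$, hence $(y_p-\tilde y_p)p=0$, and since both operators have right support $\le p$ we conclude $y_p=\tilde y_p$. For the left support, the upper bound $l(y_p)\leq\bigvee_k l(\Psi(v_k))=\bigvee_k\Phi(p_k)=\Phi(p)$ follows from subadditivity of $l(\cdot)$ under sums together with the orthogonality of the $\Phi(p_k)$, while the reverse bound comes from $l(y_p)\geq l(y_pv_k)=l(\Psi(v_k))=\Phi(p_k)$ for each $k$ (using the compatibility $l(\Psi(\cdot))=\Phi(l(\cdot))$ and $l(v_k)=p_k$); taking the supremum yields $l(y_p)=\Phi(p)$.

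The main obstacle is upgrading $r(y_p)\leq p$ to an equality, since the elementary estimates only control $r(y_p)$ up to Murray--von Neumann equivalence. Here I would invoke finiteness. The polar decomposition of $y_p$ in $LS(M)$ (with partial isometry in $M$) gives $r(y_p)\sim l(y_p)=\Phi(p)$. On the other hand $\Phi(p)=\sum_k\Phi(p_k)\sim\sum_k p_k=p$, because each $\Psi(v_k)$ has right support $p_0$ and left support $\Phi(p_k)$, so its polar decomposition gives $\Phi(p_k)\sim p_0\sim p_k$, and these equivalences add up over the orthogonal families. Thus $r(y_p)\sim p$ while $r(y_p)\leq p$; since $p_0=1\otimes e_0$ is a finite projection (its corner is $\cong M_0$, of type II$_1$) and $p$ is a finite sum of copies of $p_0$, the projection $p$ is finite, whence $r(y_p)\leq p$ and $r(y_p)\sim p$ force $r(y_p)=p$. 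Finally, additivity is routine: for $p\perp q$ in $\P(M)_1$ one concatenates chosen decompositions of $p$ and $q$ into a decomposition of $p+q$, and well-definedness gives $y_{p+q}=y_p+y_q$.
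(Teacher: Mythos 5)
Most of your proposal is sound, and one part genuinely improves on the paper: your key identity $y_pw=\Psi(w)$ for partial isometries $w$ with $w^*w=p_0$, $ww^*\leq p$ is a clean packaging of the well-definedness computation, and your lower bound $\Phi(p_k)=l(\Psi(v_k))=l(y_pv_k)\leq l(y_p)$ yields $l(y_p)=\Phi(p)$ more directly than the paper does (the paper instead shows $l(y_p)\succ\Phi(p)$ by comparing center-valued traces on a finite corner and then uses finiteness of $\Phi(p)$). The genuine gap is in your proof that $r(y_p)=p$, at the step ``$\Phi(p)=\sum_k\Phi(p_k)\sim\sum_kp_k=p$ \ldots\ these equivalences add up over the orthogonal families.'' This presumes that the projections $\Phi(p_k)$ are mutually orthogonal, which is unjustified: $\Phi$ is only a lattice isomorphism, and lattice isomorphisms do not preserve orthogonality. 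Indeed, that is the whole point of the paper: the form ultimately established is $\Phi(q)=l(y\psi(q))$ with $y$ invertible but not unitary, and conjugation by such $y$ destroys orthogonality (the orthogonality-preserving case goes back to Dye \cite{Dy}). What you actually have is $\Phi(p)=\bigvee_k\Phi(p_k)$, a join of pairwise equivalent but possibly non-orthogonal projections, and a priori such a join need not be equivalent to the orthogonal sum $\sum_kp_k$; without that, $r(y_p)\sim l(y_p)=\Phi(p)$ gives no contradiction with $r(y_p)\lneq p$. (The same misconception appears in your upper bound for $l(y_p)$, but there it is harmless: subadditivity of $l$ and $\bigvee_k\Phi(p_k)=\Phi\bigl(\bigvee_kp_k\bigr)$ require no orthogonality.)

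The gap can be repaired in two ways. (i) Keep your strategy, but replace the false orthogonality claim by independence: since $\Phi$ is a lattice isomorphism, $\Phi(p_j)\wedge\bigvee_{i\neq j}\Phi(p_i)=\Phi\bigl(p_j\wedge\bigvee_{i\neq j}p_i\bigr)=0$; then Kaplansky's formula $e\vee f-f\sim e-e\wedge f$ and induction on $n$ show that a finite independent family $(f_k)$ with $f_k\sim p_k$ satisfies $\bigvee_kf_k\sim\sum_kp_k$. This rescues $\Phi(p)\sim p$, after which your finiteness argument closes the proof. (ii) The paper's own route avoids Murray--von Neumann equivalence entirely: given a nonzero $q\leq p$, pick a nonzero $q_1\leq q$ with $q_1\prec p_0$, use well-definedness to \emph{re-choose} the decomposition of $p$ so that $q_1\leq p_1$; then $y_pq_1=\Psi(v_1)v_1^*q_1$ (the other terms vanish because $v_j^*q_1=0$ for $j\neq 1$), and $\Psi(v_1^*)y_pq_1=\Psi(v_1^*v_1)v_1^*q_1=p_0v_1^*q_1=v_1^*q_1\neq 0$, so $y_pq\neq0$ for every nonzero $q\leq p$, forcing $r(y_p)=p$. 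Note that the paper's argument exploits precisely the freedom to vary the decomposition that well-definedness provides, which your fixed-decomposition equivalence argument never uses.
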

\begin{proof}
We first show that $y_p$ is well-defined. 
Let $p=p'_1+\cdots+p'_n$ with $p'_1, \ldots, p'_n\in \P(M)_0$ be another decomposition of $p$. 
Let $v'_i\in M$ satisfy ${v'_i}^*v'_i=p_0$, $v'_i{v'_i}^*=p'_i$ for $i\in \{1, \ldots, n\}$. 
For each $i\in \{1, \ldots, n\}$, we obtain
\[
\begin{split}
\Psi(v_i)v_i^* = \Psi(v_i)v_i^*p &= \Psi(v_i)v_i^* (v'_1{v'_1}^*+\cdots+v'_n{v'_n}^*)\\
&= \Psi(v_i)(v_i^*v'_1){v'_1}^*+\cdots+\Psi(v_i)(v_i^*v'_n){v'_n}^*\\
&= \Psi(v_iv_i^*v'_1){v'_1}^*+\cdots+\Psi(v_iv_i^*v'_n){v'_n}^*,
\end{split}
\]
where the third equality is a consequence of $v_i^*v'_j\in p_0Mp_0=\{x\otimes e_0\mid x\in M_0\}$ for each $j$.
Therefore, $\Psi(v_1)v_1^*+\cdots+\Psi(v_n)v_n^*$ is equal to 
\[
\sum_{1\leq i, j\leq n} \Psi(v_iv_i^*v'_j){v'_j}^*
= \sum_{j=1}^n \Psi\left(\sum_{i=1}^n v_iv_i^*v'_j\right){v'_j}^*
= \sum_{j=1}^n \Psi\left(pv'_j\right){v'_j}^*
= \sum_{j=1}^n \Psi(v'_j){v'_j}^*.
\]
This ensures that $y_p$ is well-defined. 
It clearly follows from the definition that $y_{p+q}=y_p+y_q$ for any pair $p, q\in \P(M)_1$ with $q\perp p$.

From the definition again, it is easy to see that $r(y_p)\leq p$ for any $p\in \P(M)_1$. 
Let $q\in \P(M)$ be any nonzero projection with $q\leq p$. 
We may find a nonzero projection $q_1\leq q$ with $q_1\prec p_0$. 
Then we may in turn take mutually orthogonal $p_1, \ldots, p_n\in \P(M)_0$ such that $q_1\leq p_1$ and $p=p_1+\cdots+p_n$ because $M$ is properly infinite and $p_0$ is finite. 
For each $i\in \{1, \ldots, n\}$, take a partial isometry $v_i\in M$ such that $v_i^*v_i=p_0$, $v_iv_i^*=p_i$. 
Then we obtain $y_p = \Psi(v_1)v_1^*+\cdots+\Psi(v_n)v_n^*$. 
It follows that $y_pq_1=\Psi(v_1)v_1^*q_1$, which is nonzero because $\Psi(v_1^*)\Psi(v_1)v_1^*q_1= \Psi(v_1^*v_1)v_1^*q_1=\Psi(p_0)v_1^*q_1=p_0v_1^*q_1=v_1^*q_1\neq 0$.  
Thus we obtain $y_pq\neq 0$ for any nonzero $q\leq p$, which leads to the equality $r(y_p)=p$. 

Let $p=p_1+\cdots+p_n\in \P(M)_1$ with $p_1, \ldots, p_n\in \P(M)_0$ again, and for each $i\in \{1, \ldots, n\}$, take a partial isometry $v_i\in M$ such that $v_i^*v_i=p_0$, $v_iv_i^*=p_i$. 
Then we have 
\[
l(y_p)\leq \bigvee_{1\leq i\leq n}l(\Psi(v_i))=\bigvee_{1\leq i\leq n}\Phi(l(v_i))= \bigvee_{1\leq i\leq n}\Phi(p_i) = \Phi\left(\bigvee_{1\leq i\leq n}p_i\right)=\Phi(p).
\] 
On the other hand, we see 
\[
l(y_p)\sim r(y_p)=p=p_1+\cdots+p_n = r(y_{p_1})+\cdots+r(y_{p_n})\succ l(y_{p_1})\vee\cdots\vee l(y_{p_n}).
\]
The last order relation $\succ$ is obtained by e.g.\ comparing the center-valued trace on the finite von Neumann algebra $eMe$ for $e=(r(y_{p_1})+\cdots+r(y_{p_n}))\vee l(y_{p_1})\vee\cdots\vee l(y_{p_n})$.
Since $l(y_{p_i})=l(\Psi(v_i)v_i^*)=l(\Psi(v_i)v_i^*v_i)=l(\Psi(v_i)p_0)=l(\Psi(v_i))=\Phi(p_i)$ for each $i$, we obtain $l(y_p)\succ \bigvee_{1\leq i\leq n}\Phi(p_i)=\Phi(p)$. 
This together with the fact that $\Phi(p)$ is finite (see \cite[Proof of Lemma 4.1]{M}) implies that $l(y_p)=\Phi(p)$.
\end{proof}

Assume that $p, q\in \P(M)_1$ satisfy $p\leq q$. 
By the equations $y_q=y_p+y_{q-p}$, $r(y_p)=p$,  
and $r(y_{q-p})=q-p$,
we obtain the equality $y_qp=y_p$ in $LS(M)$. 
On the other hand, it is clear that the product $y_qp$ as an unbounded operator is already closed.  
Therefore, we have $\D(y_p)\cap p(L\otimes H)=  \D(y_q)\cap p(L\otimes H)$ and $y_ph=y_q h$ for every $h\in \D(y_p)\cap p(L\otimes H)$.

Let $\check{y}$ be the unbounded linear operator on $L\otimes H$ determined by the following conditions: 
The domain $\D(\check{y})$ is the union of $\D(y_p)\cap p(L\otimes H)$ for $p\in \P(M)_1$. 
(Here we consider each $y_p\in LS(M)$ as an unbounded closed operator on $L\otimes H$.)
If $h\in \D(y_p)\cap p(L\otimes H)$ for $p\in \P(M)_1$, then we define $\check{y}h:=y_ph$. 
Let $h\in L\otimes H$ satisfy $h\in\D(y_{p_i})\cap {p_i}(L\otimes H)$ for $p_i\in \P(M)_1$, $i=1, 2$. 
Then we may find a projection $q\in \P(M)_1$ with $p_1\vee p_2\leq q$. 
Thus the preceding paragraph implies that $y_{p_1}h=y_q h=y_{p_2}h$, and this verifies that $\check{y}$ is well-defined.
The same argument also shows that the equality $\check{y}p=y_p$ holds as unbounded operators for any $p\in \P(M)_1$.
The domain of $\check{y}$ contains dense subspace of the range of $p$ for every $p\in \P(M)_1$, so $\check{y}$ is densely-defined.

\begin{claim}
The operator $\check{y}$ is closable.
\end{claim}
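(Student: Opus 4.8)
The plan is to prove closability by contradiction, reducing the whole question to the membership of a single positive operator in a suitable $LS$-algebra, and then invoking the invertibility criteria of Subsection 2.4. Suppose $\check{y}$ is not closable, so there are $h_n\in\D(\check y)$ with $h_n\to 0$ and $\check y h_n\to g\neq 0$; write $h_n\in \D(y_{p_n})\cap p_n(L\otimes H)$ with $p_n\in\P(M)_1$. Set $P_m:=p_1\vee\cdots\vee p_m\in\P(M)_1$ and $P:=\bigvee_n p_n$. The coherence relation $y_qp=y_p$ (for $p\leq q$ in $\P(M)_1$) gives $y_{P_m}^*y_{P_m}=P_m\,y_{P_{m'}}^*y_{P_{m'}}\,P_m$ for $m\leq m'$, so the operators $B_m:=y_{P_m}^*y_{P_m}\in LS(P_mMP_m)_+$ cohere under compression and assemble — exactly as $\check y$ itself was assembled — into a single positive self-adjoint operator $A\,\eta\,PMP$ on $\bigcup_m P_m(L\otimes H)$, with $\lVert\check y h\rVert^2=\langle Ah,h\rangle$ on each piece. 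If one knows $A\in LS(PMP)$, then $A$ is closed, its quadratic form is closable, and from $\check y h_n\to g$ (so $\lVert\check y(h_n-h_m)\rVert\to 0$) together with $h_n\to 0$ one concludes $\lVert\check y h_n\rVert\to 0$, i.e.\ $g=0$. Thus it suffices to prove $A\in LS(PMP)$.

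Here the invertibility machinery enters. By Corollary \ref{LS}, if $A\notin LS(PMP)$ there exist a strictly increasing sequence $c_n\to\infty$ and pairwise equivalent nonzero projections $f_n\leq \chi_{[c_n,c_{n+1})}(A)\leq P$, that is, directions on which $\check y$ expands by a factor tending to $\infty$. I would stress that the naive hope of avoiding all this — namely that $y_q^*k$ stabilizes for $k$ in the range of a fixed $\Phi(p_1)$ — \emph{fails}, because $\Phi$ does not preserve ordinary orthogonality (only LS-orthogonality), so the cross terms $\Phi(p_1)y_{q-p_1}$ do not vanish. This is precisely why Lemma \ref{inverse} and Corollary \ref{ortho} must do the quantitative work.

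To rule out the $f_n$, the plan is to run the identical construction for the inverse ring isomorphism $\Psi^{-1}\colon LS(N)\to LS(M)$, producing operators $z_q\in LS(M)$ ($q\in\P(N)_1$) with $r(z_q)=q$, $l(z_q)=\Phi^{-1}(q)$, and to show that $\check z$ inverts $\check y$ piece by piece. An expansion of $\check y$ by $c_n$ on $f_n$ then turns into a contraction of the $\Psi^{-1}$-side by $c_n$ on the pairwise equivalent projections $\Phi(f_n)$; concretely this yields pairwise equivalent nonzero projections $g_n$ with $\lVert g_n\lvert z\rvert g_n\rVert\to 0$ for the positive operator $\lvert z\rvert$ arising from the $\Psi^{-1}$-construction, which by Lemma \ref{inverse}~(3)$\Rightarrow$(1) forces non-invertibility — contradicting the fact that $\Phi$ preserves LS-invertibility of the relevant positive operators (Corollary \ref{ortho}), a property expected from the order-theoretic analysis of $\Phi$ in \cite{M}. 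I expect the main obstacle to be exactly this last linkage between the $\Psi$- and $\Psi^{-1}$-sides, equivalently the uniform control of the non-orthogonal cross terms $\Phi(p_1)y_{q-p_1}$: it is the one place where the absence of genuine orthogonality must be compensated by the delicate unbounded-operator estimates, and where the type II$_\infty$ hypothesis is used in an essential way through Lemma \ref{inverse} and Corollary \ref{LS}.
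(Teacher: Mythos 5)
Your proposal has a genuine gap at its foundation: the operator $A$ you need is not well-defined by your construction. The coherence you correctly observe, $y_{P_m}^*y_{P_m}=P_m\,y_{P_{m'}}^*y_{P_{m'}}\,P_m$, is coherence under \emph{compression}, whereas $\check{y}$ was assembled from coherence under \emph{restriction}: the identity $y_{P_{m'}}P_m=y_{P_m}$ says that $y_{P_{m'}}$ literally agrees with $y_{P_m}$ on vectors in $P_m(L\otimes H)$, and this is what makes $\check{y}$ well-defined. For $B_m:=y_{P_m}^*y_{P_m}$ no such restriction identity holds: for $h\in P_m(L\otimes H)$ one has $B_{m'}h=y_{P_{m'}}^*\,y_{P_m}h$, and $y_{P_{m'}}^*$ does not agree with $y_{P_m}^*$ on the range of $y_{P_m}$, precisely because the ranges $\Phi(p_i)(L\otimes K)$ are not mutually orthogonal --- the very cross-term phenomenon you flag yourself. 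So $B_{m'}h\neq B_mh$ in general, and there is no operator $A$ on $\bigcup_m P_m(L\otimes H)$ with $A|_{P_m}=B_m$. The natural repair --- assembling the quadratic forms $\lVert\check{y}h\rVert^2$ and invoking the representation theorem to produce a positive self-adjoint $A$ --- requires the form to be closable, and closability of the form $h\mapsto\lVert\check{y}h\rVert^2$ is equivalent to closability of $\check{y}$; that route is circular.

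Moreover, even granting $A$, your plan for proving $A\in LS(PMP)$ (pass to $\Psi^{-1}$, build $z_q$, use Lemma \ref{inverse} and Corollary \ref{ortho}) is essentially a sketch of the paper's proof of a \emph{later} claim --- that $\lvert\hat{y}\rvert\in LS(M)$ and is invertible --- which is carried out only after closability is established, using the closure $\hat{y}$, its polar decomposition and its spectral projections; it cannot be run before the operator in question exists. The paper's actual closability argument needs none of this machinery and is elementary: choose the $p_n$ mutually orthogonal with $h_n\in\D(y_{p_1+\cdots+p_n})\cap(p_1+\cdots+p_n)(L\otimes H)$, split $h_n=h_{n,1}+\cdots+h_{n,n}$ with $\check{y}h_{n,m}\in\Phi(p_m)(L\otimes K)$, and apply the bounded projection $\Phi\bigl(\bigvee_{l\geq j+1}p_l\bigr)^{\perp}$, which annihilates every tail term $\check{y}h_{n,m}$ with $m\geq j+1$; closability of $y_{p_1+\cdots+p_j}\in LS(M)$ then gives $\Phi\bigl(\bigvee_{l\geq j+1}p_l\bigr)^{\perp}k=0$ for each $j$, and since $\Phi$ is a lattice isomorphism, $\bigwedge_{j\geq 1}\Phi\bigl(\bigvee_{l\geq j+1}p_l\bigr)=\Phi\bigl(\bigwedge_{j\geq 1}\bigvee_{l\geq j+1}p_l\bigr)=0$, so $k=0$. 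In other words, your instinct that the failure of $\Phi$ to preserve orthogonality is the obstacle is right, but the remedy is this lattice-theoretic tail argument (meets are preserved, which substitutes for orthogonality), not the invertibility criteria of Subsection 2.4.
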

\begin{proof}
It suffices to verify that if a sequence $h_n\in \D(\check{y})$ satisfies $h_n\to 0$ and $\check{y}h_n\to k$ for some $k\in L\otimes K$ in norm then $k=0$. 
We know that $\D(y_p)\cap p(L\otimes H)\subset \D(y_q)\cap q(L\otimes H)$ if $p, q\in \P(M)_1$ satisfy $p\leq q$.
Hence we may find a sequence of mutually orthogonal projections $(p_n)_{n\geq 1}\subset \P(M)_1$ such that $h_n\in \D(y_{p_1+\cdots+p_n})\cap (p_1+\cdots+p_n)(L\otimes H)$ for each $n\geq 1$.
Since $y_{p_1+\cdots+p_n} =y_{p_1+\cdots+p_n}(p_1+\cdots+p_n) = \overline{y_{p_1}+\cdots+y_{p_n}}$ as unbounded operators, we may assume with no loss of generality that there exist $h_{n, 1}\in \D(y_{p_1})\cap p_1(L\otimes H)$, \ldots, $h_{n, n}\in \D(y_{p_n})\cap p_n(L\otimes H)$ such that $h_n = h_{n, 1}+\cdots+h_{n, n}$. 
Note that $\check{y}h_{n,m}=y_{p_m}h_{n,m}\in \Phi(p_m)(L\otimes K)$ for $m\in \{1, \ldots, n\}$. 
Fix $j\geq 1$.
For $n\geq j$, we obtain
\[
\begin{split}
\Phi\left(\bigvee_{l\geq j+1}p_l\right)^{\perp}\check{y}h_n &= \Phi\left(\bigvee_{l\geq j+1}p_l\right)^{\perp}\check{y}(h_{n, 1}+\cdots+h_{n, j})\\
&= \Phi\left(\bigvee_{l\geq j+1}p_l\right)^{\perp}\check{y}(p_1+\cdots+p_j)h_n.
\end{split}
\]
Since $y_{p_1+\cdots+p_j}\in LS(M)$, the unbounded operator $\Phi\left(\bigvee_{l\geq j+1}p_l\right)^{\perp}\check{y}(p_1+\cdots+p_j)=\Phi\left(\bigvee_{l\geq j+1}p_l\right)^{\perp}y_{p_1+\cdots+p_j}$ is closable. 
On the other hand, we know $h_n\to 0$ and $\Phi\left(\bigvee_{l\geq j+1}p_l\right)^{\perp}\check{y}h_n\to\Phi\left(\bigvee_{l\geq j+1}p_l\right)^{\perp}k$.
Thus we obtain $\Phi\left(\bigvee_{l\geq j+1}p_l\right)^{\perp}k=0$. 
We have shown that $k$ lies in $\Phi\left(\bigvee_{l\geq j+1}p_l\right)(L\otimes K)$ for each $j\geq 1$. 
On the other hand, we know $\bigwedge_{j\geq1}\bigvee_{l\geq j+1}p_l =0$, which together with the fact that $\Phi$ is a lattice isomorphism shows that $\bigwedge_{j\geq1}\Phi\left(\bigvee_{l\geq j+1}p_l\right)=\Phi\left(\bigwedge_{j\geq1}\bigvee_{l\geq j+1}p_l\right) =0$. 
Thus we finally arrive at the conclusion $k=0$, and we see that $\check{y}$ is closable.
\end{proof}

Let $\hat{y}$ denote the closure of $\check{y}$.
Since $y_p\eta M$ for every $p\in \P(M)_1$, we may easily obtain $\hat{y} \eta M$. 
For each $p\in \P(M)_1$, we see that $y_p$ and $\hat{y}p$ are densely-defined closed operators with $y_p\subset \hat{y}p$. 
Since $p$ is finite, we obtain $y_p= \hat{y}p$. 
(This is a consequence of the fact that a closed densely-defined operator affiliated with a finite von Neumann algebra has no proper closed extension, see e.g.\ \cite[Exercise 6.9.54]{KR}.)

For a projection $p\in \P(M)$ with $p\prec p_0$, we may find a projection $p_1\in \P(M)_0$ with $p\leq p_1$. 
Fix a partial isometry $v_1\in M$ with $v_1^*v_1=p_0$ and $v_1v_1^*=p_1$. 
Since $y_{p_1}=\Psi(v_1)v_1^*$, we see that 
\[
l(\hat{y}p)=l(y_{p_1}p)=l(\Psi(v_1)v_1^*p)=l(\Psi(v_1)v_1^*pv_1)=l(\Psi(v_1v_1^*pv_1))=l(\Psi(pv_1))=\Phi(p).
\] 
Thus we obtain $l(\hat{y}p)=\Phi(p)$ for every $p\in \P(M)$ with $p\prec p_0$.

Let $\hat{y}=v\lvert \hat{y}\rvert=\lvert \hat{y}^*\rvert v$ be the polar decomposition of $\hat{y}$. 
Then $v$ is a partial isometry in $M$.
The fact that $r(y_p)=p$ and $l(y_p)=\Phi(p)$ for $p\in \P(M)_1$ shows that $v$ satisfies $v^*v=1\in M$ and $vv^*=1\otimes p_K\in M_0\otimesbar B(H)$, where $p_K\in \P(B(H))$ denotes the projection onto $K$.
Thus $v$ determines a $^*$-isomorphism $M\ni x\mapsto vxv^*\in N$.

\begin{claim}
$\lvert \hat{y}\rvert\in LS(M)$.
\end{claim}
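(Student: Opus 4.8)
The plan is to argue by contradiction using Corollary \ref{LS}. Suppose $\lvert\hat{y}\rvert\notin LS(M)$. Since $\lvert\hat{y}\rvert\eta M$ is positive, self-adjoint and injective (recall $v^*v=1$), Corollary \ref{LS} supplies a strictly increasing $c_n\to\infty$ and pairwise equivalent nonzero projections $f_n\le\chi_{[c_n,c_{n+1})}(\lvert\hat{y}\rvert)$. Because $p_0$ is finite with $z(p_0)=1$, every nonzero projection dominates a nonzero subprojection $\prec p_0$; transporting one such subprojection of $f_1$ through the equivalences $f_n\sim f_1$, I may assume in addition that $f_n\prec p_0$ for every $n$. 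The $f_n$ are automatically mutually orthogonal, lying in disjoint spectral subspaces, and $\hat{y}f_n=y_pf_n$ is bounded (for any $p\in\P(M)_1$ with $f_n\le p$), with $\lVert\hat{y}\xi\rVert\ge c_n\lVert\xi\rVert$ for $\xi\in f_n(L\otimes H)$.

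The first key step is the implementing identity $\hat{y}x=\Psi(x)$ for every $x\in LS(M)$ with $r(x)\le p_0$. For a partial isometry $v\in M$ with $v^*v=p_0$ one has $\hat{y}v=\hat{y}(vv^*)v=y_{vv^*}v=\Psi(v)v^*v=\Psi(v)$; writing a general such $x$ in polar form $x=w\lvert x\rvert$, with $\lvert x\rvert\in p_0LS(M)p_0=LS(M_0)$ fixed by $\Psi$ and $w$ extended to a partial isometry with initial projection $p_0$, one gets $\hat{y}x=\hat{y}w\lvert x\rvert=\Psi(w)\Psi(\lvert x\rvert)=\Psi(x)$. Now choose, for each $n$, a partial isometry $s_n\in M$ with $s_n^*s_n=f_n$ and $g_n:=s_ns_n^*\le p_0$. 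Since $r(s_n^*)=g_n\le p_0$, the identity yields $\Psi(s_n^*)=\hat{y}s_n^*$, and as $\Psi(s_n^*)^*\Psi(s_n^*)=s_nf_n\lvert\hat{y}\rvert^2f_ns_n^*\ge c_n^2g_n$, the operator $\Psi(s_n^*)$ is bounded below by $c_n$ on its right support $g_n$, has left support $l(\Psi(s_n^*))=\Phi(f_n)$, and the $\Phi(f_n)$ are pairwise equivalent and $\prec p_0$ (as $\Phi$ respects $\sim$ and $\prec$, with $\Phi(p_0)=p_0$).

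The final step is to convert the lower bound $c_n\to\infty$ into a contradiction. Passing to the inverse, $\hat{y}^{-1}=\lvert\hat{y}\rvert^{-1}v^*$ is again a closed, densely-defined operator affiliated with $M$ (up to the identification used in the reduction, it is the operator furnished by the same construction applied to $\Psi^{-1}$, the two being mutually inverse by the implementing identity on both sides). The bound $\Psi(s_n^*)\ge c_n$ forces $\lVert\hat{y}^{-1}\Phi(f_n)\rVert\le 1/c_n$, and transporting $\Phi(f_n)$ back by $v$ to pairwise equivalent projections $\tilde f_n:=v^*\Phi(f_n)v\prec p_0$ in $M$ gives $\lVert\tilde f_n\lvert\hat{y}\rvert^{-2}\tilde f_n\rVert=\lVert\hat{y}^{-1}\Phi(f_n)\rVert^2\to0$. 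The plan is then to feed this into Corollary \ref{ortho} (equivalently Lemma \ref{inverse}), reading it as the failure of LS-orthogonality of a pair of projections built from the graph of $\hat{y}$ inside the amplification $M\otimesbar\M_2$, and to contradict the LS-orthogonality that this pair must satisfy because the associated lattice isomorphism respects LS-orthogonality (see \cite{M}). The hard part is precisely this last conversion: phrasing ``$\lvert\hat{y}\rvert\in LS(M)$'' as an LS-orthogonality statement and extracting a contradiction without circularly invoking the dual statement $\lvert\hat{y}^{-1}\rvert\in LS(N)$. The symmetry-breaking input should be the finiteness of $p_0$ together with the pairwise equivalence of the $f_n$: only finitely many mutually orthogonal copies of a fixed nonzero subprojection fit inside the finite projection $p_0$, which is what rules out the runaway growth of the singular numbers of $\hat{y}$.
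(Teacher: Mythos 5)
Your opening moves are sound and in fact parallel the paper: the reduction via Corollary \ref{LS} to equivalent projections $f_n\leq \chi_{[c_n,c_{n+1})}(\lvert\hat{y}\rvert)$ with $f_n\prec p_0$, and the implementing identity $\hat{y}x=\Psi(x)$ for $r(x)\leq p_0$ (the paper records the same information as $l(\hat{y}p)=\Phi(p)$ for $p\prec p_0$), are both correct. But the proof stops exactly where the real work begins, and you say so yourself. The bound $\lVert \tilde{f}_n\lvert\hat{y}\rvert^{-2}\tilde{f}_n\rVert\to 0$ for pairwise equivalent $\tilde{f}_n$ is, via Lemma \ref{inverse}, nothing but a restatement of the contradiction hypothesis $\lvert\hat{y}\rvert\notin LS(M)$ transported by $v$; no contradiction can come from it alone. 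Moreover, both ideas you float for closing the gap fail. The graph-projection idea is a dead end because $\Phi$ is a lattice isomorphism $\P(M)\to\P(N)$ only; there is no lattice isomorphism on the amplification $M\otimesbar\M_2$ available to preserve LS-orthogonality of graph projections. The finiteness-counting idea also rules out nothing: the copies $g_n=s_ns_n^*\leq p_0$ are not, and cannot be, mutually orthogonal, and nothing in the situation forces orthogonal copies inside $p_0$; infinitely many mutually orthogonal equivalent projections $f_n\prec p_0$ exist in abundance in any type II$_\infty$ algebra, so the finiteness of $p_0$ by itself is consistent with everything you have derived.

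The missing idea, which is the heart of the paper's proof, is to convert the unbounded growth of $\lvert\hat{y}\rvert$ into a violation of a \emph{lattice-theoretic} invariant of a single pair of projections inside $\P(M)$, namely LS-orthogonality, which $\Phi$ must preserve by \cite[Lemma 3.6]{M}. Concretely: pass to a subsequence with $c_{n_{2k+1}}/c_{n_{2k}+1}\to\infty$, take partial isometries $v_k$ with $v_k^*v_k=f_{n_{2k}}$, $v_kv_k^*=f_{n_{2k+1}}$, and set $g_k:=(f_{n_{2k}}+v_k+v_k^*+f_{n_{2k+1}})/2$ (the ``$45$-degree'' projection), $e_k:=f_{n_{2k+1}}$, $g:=\bigvee_k g_k$, $e:=\bigvee_k e_k$. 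Then $g$ and $e$ are LS-orthogonal in $M$ by construction. However, $\lvert\hat{y}\rvert$ stretches vectors in $e_k(L\otimes H)$ by at least $c_{n_{2k+1}}$ while stretching vectors in $f_{n_{2k}}(L\otimes H)$ by at most $c_{n_{2k}+1}$, so the range of $\hat{y}g_k$ tilts into the range of $\hat{y}e_k$; using $l(\hat{y}p)=\Phi(p)$ for $p\prec p_0$ this gives $\lVert\Phi(g_k)\Phi(e_k)^\perp\Phi(g_k)\rVert\to 0$, hence $\lVert\Phi(g_k)\Phi(e)^\perp\Phi(g_k)\rVert\to 0$, and since the $\Phi(g_k)$ are pairwise equivalent nonzero subprojections of $\Phi(g)$, Corollary \ref{ortho} says $\Phi(g)$ is \emph{not} LS-orthogonal to $\Phi(e)$ --- contradicting the preservation of LS-orthogonality. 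This is the symmetry-breaking input you were looking for: not the finiteness of $p_0$, but the fact that the angle structure encoded in the lattice $\P(M)$ (via LS-orthogonality) is rigid under $\Phi$, while an unbounded $\lvert\hat{y}\rvert$ would necessarily distort it.
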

\begin{proof}
Assume that $\lvert \hat{y}\rvert$ is not in $LS(M)$. 
By Corollary \ref{LS}, we may take a strictly increasing sequence $c_n$ of positive real numbers and a sequence $f_n\in\P(M)$ of pairwise equivalent nonzero projections with the following properties: (i) $c_n\to\infty$ as $n\to\infty$. (ii) $f_n\leq \chi_{[c_n, c_{n+1})}(\lvert \hat{y}\rvert)$. 
By taking subprojections of $f_n$ if necessary, we may additionally assume that $f_n\prec p_0$ for every $n\geq 1$. 
We may take a strictly increasing sequence of positive integers $(n_k)_{k\geq 1}\subset \mathbb{N}$ such that 
$c_{n_{2k+1}}/c_{n_{2k}+1}\to \infty$.
Take a sequence of partial isometries $v_k\in M$ such that $v_k^*v_k=f_{n_{2k}}$ and $v_kv_k^*=f_{n_{2k+1}}$ for each $k\geq 1$. 

Set $g_k:=(f_{n_{2k}}+v_k+v_k^*+f_{n_{2k+1}})/2$ and $e_k :=f_{n_{2k+1}}$.
Then $(g_k)_{k\geq 1}$, $(e_k)_{k\geq 1}$ are sequences of mutually orthogonal equivalent projections. 
Set $g:= \bigvee_{k\geq 1} g_k$ and $e:= \bigvee_{k\geq 1} e_k$. 
It is clear that $g$ and $e$ are mutually LS-orthogonal.
We claim that $\lVert l(\lvert \hat{y}\rvert g_k)l(\lvert \hat{y}\rvert e_k)^\perp l(\lvert \hat{y}\rvert g_k)\rVert\to 0$ as $k\to \infty$.
To see this, observe that each unit vector of the range of the operator $\lvert \hat{y}\rvert g_k$ is of the form $\lvert\hat{y}\rvert h+\lvert\hat{y}\rvert v_kh$ for some $h\in f_{n_{2k}} (L\otimes H)$. 
Since $\lvert\hat{y}\rvert h=\lvert\hat{y}\rvert \chi_{[c_{n_{2k}}, c_{n_{2k}+1})}(\lvert\hat{y}\rvert)h$ and $\lvert\hat{y}\rvert v_kh=\lvert\hat{y}\rvert \chi_{[c_{n_{2k+1}}, c_{n_{2k+1}+1})}(\lvert\hat{y}\rvert)v_kh$, we obtain $\lvert\hat{y}\rvert h\perp \lvert\hat{y}\rvert v_kh$, $\lVert \lvert\hat{y}\rvert h\rVert\leq c_{n_{2k}+1} \rVert h\rVert$, and $\lVert \lvert\hat{y}\rvert v_kh\rVert\geq c_{n_{2k+1}} \rVert h\rVert$. 
Therefore, if $c_{n_{2k+1}}/c_{n_{2k}+1}$ is large, then the unit vector $\lvert\hat{y}\rvert h+\lvert\hat{y}\rvert v_kh$ is close to the unit vector $\lvert\hat{y}\rvert v_kh/\lVert \lvert\hat{y}\rvert v_kh\rVert$ which lies in the range of $\lvert \hat{y}\rvert e_k$. 
This together with an application of Subsection \ref{halmos} to the pair $l(\lvert \hat{y}\rvert g_k), l(\lvert \hat{y}\rvert e_k)$ shows that $\lVert l(\lvert \hat{y}\rvert g_k)l(\lvert \hat{y}\rvert e_k)^\perp l(\lvert \hat{y}\rvert g_k)\rVert\to 0$ as $k\to\infty$.

Since $g_k\prec p_0$, we obtain $l(\lvert \hat{y}\rvert g_k) = v^*l(v\lvert \hat{y}\rvert g_k)v=v^*\Phi(g_k)v$, and similarly, $l(\lvert \hat{y}\rvert e_k) =v^*\Phi(e_k)v$ for each $k$.
Therefore, we obtain 
\[\begin{split}
\lVert \Phi(g_k)\Phi(e_k)^\perp \Phi(g_k)\rVert&=\lVert v^*\Phi(g_k)v\cdot (v^*\Phi(e_k) v)^\perp\cdot v^*\Phi(g_k)v\rVert\\
&=\lVert l(\lvert \hat{y}\rvert g_k)l(\lvert \hat{y}\rvert e_k)^\perp l(\lvert \hat{y}\rvert g_k)\rVert\to 0.
\end{split}
\] 
Since $\Phi(e)\geq \Phi(e_k)$, we obtain 
\[
0\leq \Phi(g_k)\Phi(e)^\perp\Phi(g_k) \leq \Phi(g_k)\Phi(e_k)^\perp\Phi(g_k),
\]
which shows that $\lVert \Phi(g_k)\Phi(e)^\perp\Phi(g_k)\rVert\to 0$ as $k\to \infty$.
Note that $(\Phi(g_k))_{k\geq 1}$ is a sequence of pairwise equivalent subprojections of $\Phi(g)$. 
(Indeed, for a pair $i\neq j$ of positive integers, we may find a partial isometry $w\in M$ such that $w^*w=g_i$ and $ww^*=g_j$. Then $g' =(g_i+w+w^*+g_j)/2+(1-g_i-g_j) \in \P(M)$ satisfies $g_i\wedge g'=g_j\wedge g'=0$ and $g_i\vee g'=g_j\vee g'=1$. It follows that $\Phi(g_i)\wedge \Phi(g')=\Phi(g_j)\wedge \Phi(g')=0$ and $\Phi(g_i)\vee \Phi(g')=\Phi(g_j)\vee \Phi(g')=1$. Subsection \ref{halmos} applied to the pair $\Phi(g_i), \Phi(g')$ implies $\Phi(g_i)\sim \Phi(g')^\perp$, and similarly, we obtain $\Phi(g_j)\sim \Phi(g')^\perp$, thus $\Phi(g_i)\sim\Phi(g_j)$.)
Moreover, by \cite[Lemma 3.6]{M}, the projections $\Phi(g)$, $\Phi(e)$ in $N$ are LS-orthogonal.
This contradicts Corollary \ref{ortho}.  
Therefore, we arrive at the conclusion that $\lvert \hat{y}\rvert\in LS(M)$. 
\end{proof}

By applying Lemma \ref{inverse} and imitating the above discussion, we may also show that $\lvert \hat{y}\rvert$ is invertible in $LS(M)$. 
Thus $\lvert \hat{y}\rvert$ is an operator in $LS(M)$ that is invertible in $LS(M)$. 
Therefore, we may define a ring isomorphism $\Psi_1\colon LS(M)\to LS(N)$ by $\Psi_1(x):=v\lvert \hat{y}\rvert x \rvert \hat{y}\rvert^{-1} v^*$.
Then the corresponding lattice isomorphism $\Phi_1$ satisfies $\Phi_1(p)=l(v\lvert \hat{y}\lvert p)=l(\hat{y}p)$ for every $p\in \P(M)$. 
It follows that $\Phi(p)=l(\hat{y} p)=\Phi_1(p)$ for every $p\in \P(M)$ with $p\prec p_0$. 
Since every projection in $\P(M)$ can be written as the least upper bound of some family in $\{p\in \P(M)\mid p\prec p_0\}$, the two lattice isomorphisms $\Phi$ and $\Phi_1$ coincide on $\P(M)$.
Therefore, the corresponding ring isomorphisms $\Psi$ and $\Psi_1$ coincide, i.e., $\Psi(x)=v\lvert \hat{y}\rvert x\lvert \hat{y}\rvert^{-1} v^*$ for every $x\in LS(M)$. 
It follows that $\Psi$ is similar to a $^*$-isomorphism.
This completes the proof of Proposition \ref{special} and hence that of Theorem \ref{main}.

\section{Remarks}
Note that the operator $y$ in Theorems \ref{main}--\ref{projection} has a polar decomposition $y=u\lvert y\rvert=\lvert y^*\rvert u$ such that $u\in N$ is a unitary operator. 
Thus we may take the pair $(u\psi(\cdot)u^*, \lvert y^*\rvert)$ instead of $(\psi, y)$. 
This implies that one may take \emph{positive} invertible $y\in LS(N)$ in these theorems. 
Now let us discuss the uniqueness of $(\psi, y)$ with positive $y$. 

\begin{lemma}
Let $M, N$ be von Neumann algebras.
Let $\psi_1, \psi_2\colon M\to N$ be real $^*$-isomorphisms, and let $y_1, y_2$ be positive invertible operators in $LS(N)$. 
Then the equality $y_1\psi_1(x)y_1^{-1}=y_2\psi_2(x)y_2^{-1}$ holds for every $x\in LS(M)$ if and only if $\psi_1=\psi_2$ and there is a positive invertible operator $z\in LS(\Z(N))$ such that $y_2=zy_1$.  
\end{lemma}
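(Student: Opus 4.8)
The plan is to treat the two directions separately, with essentially all the work in the converse. For the ``if'' direction I would simply use that a central element commutes with everything in $LS(N)$: if $\psi_1=\psi_2=:\psi$ and $y_2=zy_1$ with $z\in LS(\Z(N))$ positive invertible, then $y_2\psi_2(x)y_2^{-1}=zy_1\psi(x)y_1^{-1}z^{-1}=y_1\psi_1(x)y_1^{-1}$, since $z$ and $z^{-1}$ pass through $y_1\psi(x)y_1^{-1}$ and cancel.

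For the converse, I would set $w:=y_2^{-1}y_1\in LS(N)$, which is invertible, and rewrite the hypothesis as $\psi_2(x)=w\psi_1(x)w^{-1}$ for all $x$, so that $\operatorname{Ad}(w)=\psi_2\circ\psi_1^{-1}$ on $LS(N)$ (using that $\psi_1$ maps $LS(M)$ onto $LS(N)$). Since $\psi_1,\psi_2$ are real $^*$-isomorphisms, this composition is again a real $^*$-isomorphism; in particular $\operatorname{Ad}(w)$ is $^*$-preserving. Expanding $wa^*w^{-1}=(waw^{-1})^*=(w^*)^{-1}a^*w^*$ for all $a\in LS(N)$ then forces $w^*w$ to commute with every element of $LS(N)$, so $w^*w$ is central, i.e.\ a positive invertible element of $LS(\Z(N))$.

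Next I would pass to the polar decomposition $w=uz_0$, where $z_0:=(w^*w)^{1/2}\in LS(\Z(N))$ is central, positive and invertible, and $u\in N$ is unitary (as $w$ is invertible); since $z_0$ is central, $w=uz_0=z_0u$. Now I exploit the positivity of $y_1$ and $y_2$. From $y_1=y_2w=z_0y_2u$, taking adjoints (all of $y_1,y_2,z_0$ are self-adjoint and $z_0$ is central) gives $y_1=z_0u^*y_2$, hence $y_2u=u^*y_2$, so $s:=y_2u$ is self-adjoint. Since $y_1=z_0s$ with $y_1\geq0$ and $z_0>0$ central, we get $s=z_0^{-1}y_1\geq0$; and from $u=y_2^{-1}s$ being unitary we obtain $1=u^*u=sy_2^{-2}s$, whence $s^2=y_2^2$ and therefore $s=y_2$, so $u=1$. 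Then $w=z_0$ is central, giving $\psi_2=\operatorname{Ad}(z_0)\circ\psi_1=\psi_1$ and $y_2=z_0^{-1}y_1=:zy_1$ with $z=z_0^{-1}\in LS(\Z(N))$ positive invertible, exactly as required.

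The step I expect to be the crux is the final forcing $u=1$. The $^*$-preservation of $\operatorname{Ad}(w)$ on its own only yields that $w^*w$ (equivalently the modulus of $w$) is central, which would leave an undetermined inner automorphism and hence allow $\psi_1\neq\psi_2$; it is precisely the normalization that \emph{both} $y_1$ and $y_2$ are positive that eliminates the unitary part of $w$ and pins down $\psi_1=\psi_2$. Without this positivity one could only conclude that two decompositions of a given ring isomorphism differ by conjugation by a unitary together with a central factor.
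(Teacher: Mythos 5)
Your proof is correct, and it follows the same overall skeleton as the paper's: set $w=y_2^{-1}y_1$, observe that $\operatorname{Ad}(w)=\psi_2\circ\psi_1^{-1}$ is a real $^*$-automorphism of $LS(N)$, show that the modulus of $w$ is central, and then use positivity of $y_1,y_2$ to eliminate the unitary part of $w$. Where you genuinely differ is in the centrality step, which is the crux. The paper proceeds by contradiction with a spectral--geometric argument: if $\lvert y\rvert$ were not central, some spectral projection $\chi_{[0,c)}(\lvert y\rvert)$ would be non-central, so there would exist a nonzero partial isometry $v\in N$ with $vv^*\leq\chi_{[0,c)}(\lvert y\rvert)$ and $v^*v\leq\chi_{[c,\infty)}(\lvert y\rvert)$; one then checks that $\psi(v)=u\lvert y\rvert v\lvert y\rvert^{-1}u^*$ strictly contracts every nonzero vector, hence is not a partial isometry, contradicting the fact that a real $^*$-isomorphism preserves partial isometries. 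Your argument is purely algebraic: $^*$-preservation of $\operatorname{Ad}(w)$ gives $wbw^{-1}=(w^*)^{-1}bw^*$ for all $b\in LS(N)$, hence $w^*wb=bw^*w$, so $w^*w$ is central outright. This is more elementary (no spectral projections, no partial isometries) and isolates transparently what $^*$-preservation buys. For the final step, the paper simply invokes uniqueness of the polar decomposition of $y_1=y_2y=(y_2\lvert y\rvert)u$ together with positivity of $y_1$ and $y_2$, whereas you reprove it by hand ($s:=y_2u$ is self-adjoint, then positive, then $s^2=y_2^2$ forces $s=y_2$ and $u=1$); the two are equivalent, and your version makes visible exactly where positivity of \emph{both} $y_1$ and $y_2$ enters, confirming the remark at the end of your proposal. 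The only point worth flagging, common to both proofs, is the identification of the center of $LS(N)$ with $LS(\Z(N))$ when you pass from ``$w^*w$ commutes with all of $LS(N)$'' to ``$w^*w\in LS(\Z(N))$''; the paper uses this identification as well, so it is not a gap in your argument.
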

\begin{proof}
The equality $y_1\psi_1(x)y_1^{-1}=y_2\psi_2(x)y_2^{-1}$ means 
\[
y_2^{-1}y_1\psi_1(x) (y_2^{-1}y_1)^{-1}= \psi_2(x) = \psi_2\circ \psi_1^{-1}(\psi_1(x)).
\]
Put $y:=y_2^{-1}y_1$, and $\psi:=\psi_2\circ \psi_1^{-1}\colon N\to N$. 
Then we have $\psi(a)=yay^{-1}$ for every $a\in LS(N)$.
Let $y=u\lvert y\rvert$ be the polar decomposition of $y$. Then $u$ is a unitary operator in $N$. 
We show that $\lvert y\rvert$ lies in the center $LS(\Z(N))$ of $LS(N)$.
If $\lvert y\rvert$ is not central, then the spectral theorem implies that $\chi_{[0, c)}(\lvert y\rvert)$ is not central for some $c>0$. 
We may take a nonzero partial isometry $v\in N$ such that $vv^*\leq \chi_{[0, c)}(\lvert y\rvert)$ and $v^*v\leq \chi_{[0, c)}(\lvert y\rvert)^\perp=\chi_{[c, \infty)}(\lvert y\rvert)$.
Then $\psi(v)=u\lvert y\rvert v\lvert y\rvert^{-1} u^*$ is not a partial isometry.
Indeed, we have $u^*\psi(v)u=\lvert y\rvert v\lvert y\rvert^{-1} = \lvert y\rvert \chi_{[0, c)}(\lvert y\rvert)v\chi_{[c, \infty)}(\lvert y\rvert)\lvert y\rvert^{-1}$, and we may check that this is a nonzero bounded operator with norm at most one such that $\lVert u^*\psi(v)uh\rVert< \lVert h\rVert$ for every nonzero vector $h$. 
This contradicts the assumption that $\psi_1, \psi_2$ are real $^*$-automorphisms. (Recall that a real $^*$-isomorphism is the direct sum of a $^*$-isomorphism and a conjugate-linear $^*$-isomorphism.)
Thus we see that $\lvert y\rvert$ is central.  
By the uniqueness of the polar decomposition of $y_1=y_2y=(y_2\lvert y\rvert)u$ together with the assumption that $y_1, y_2$ are positive, we obtain $u=1$ and $y=\lvert y\rvert \in LS(\Z(N))$. 
It follows that $\psi$ is the identity mapping, and we obtain the desired conclusion.
\end{proof}

We stated Theorem \ref{projection} in terms of real $^*$-isomorphisms, but we may equivalently use Jordan $^*$-isomorphisms.
Recall that a complex-linear bijection $J\colon A\to B$ between $^*$-algebras $A, B$ is called a \emph{Jordan $^*$-isomorphism} if it satisfies $J(x^*)=J(x)^*$ and $J(x^2)=J(x)^2$ for every $x\in A$. 
It is well-known that a Jordan $^*$-isomorphism between two von Neumann algebras can be decomposed into the direct sum of a $^*$-isomorphism and a $^*$-antiisomorphism. 
More precisely, if $J\colon M\to N$ is a Jordan $^*$-isomorphism between von Neumann algebras $M, N$, then there are central projections $r\in \P(\Z(M))$ and $z\in \P(\Z(N))$, a $^*$-isomorphism $\psi_1\colon rM\to zN$, and a $^*$-antiisomorphism $\psi_2\colon r^\perp M\to z^\perp N$ such that $J(x)=\psi_1(rx)+\psi_2(r^\perp x)$ for every $x\in M$, see e.g.\ \cite[Exercise 10.5.26]{KR}. 
For such a decomposition, notice that the map $\psi\colon M\ni x\mapsto \psi(x):=\psi_1(rx)+\psi_2(r^\perp x)^*\in N$ is a real $^*$-isomorphism. 
This gives a one-to-one correspondence between Jordan $^*$-isomorphisms and real $^*$-isomorphisms. 
Note that $J$ and $\psi$ coincide on the self-adjoint part of $M$, and in particular, on $\P(M)$. 
Therefore, we may replace the map $\psi$ in the statement of Theorem \ref{projection} with a Jordan $^*$-isomorphism.


\begin{thebibliography}{00}
\bibitem{AAKD} S. Albeverio, S. Ayupov, K. Kudaybergenov, R. Djumamuratov, Automorphisms of central extensions of type I von Neumann algebras. \emph{Studia Math.} \textbf{207} (2011), no. 1, 1--17. 
\bibitem{AK} S. Ayupov and K. Kudaybergenov, Ring isomorphisms of Murray--von Neumann algebras. \emph{J. Funct. Anal.} \textbf{280} (2021), no. 5, Paper No. 108891, 28 pp.
\bibitem{BS} A. B\"{o}ttcher and I.M. Spitkovsky, A gentle guide to the basics of two projections theory. \emph{Linear Algebra Appl.} \textbf{432} (2010), no. 6, 1412--1459.
\bibitem{Dy} H.A. Dye, On the geometry of projections in certain operator algebras. \emph{Ann. of Math.} (2) \textbf{61} (1955), 73--89. 
\bibitem{FK} T. Fack and H. Kosaki, Generalized $s$-numbers of $\tau$-measurable operators. \emph{Pacific J. Math.} \textbf{123} (1986), no. 2, 269--300.
\bibitem{Fe} J. Feldman, Isomorphisms of finite type II rings of operators. \emph{Ann. of Math.} (2) \textbf{63} (1956), 565--571. 
\bibitem{FL} P.A. Fillmore and W.E. Longstaff, On isomorphisms of lattices of closed subspaces. \emph{Canad. J. Math.} \textbf{36} (1984), no. 5, 820--829. 
\bibitem{H} P.R. Halmos, Two subspaces. \emph{Trans. Amer. Math. Soc.} \textbf{144} (1969), 381--389. 
\bibitem{KR} R.V. Kadison and J.R. Ringrose, ``Fundamentals of the theory of operator algebras. Vol. II'', Academic Press, Inc., Orlando, FL (1986). 
\bibitem{Mor2} M. Mori, Order Isomorphisms of Operator Intervals in von Neumann Algebras. \emph{Integral Equations Operator Theory} \textbf{91} (2019), no. 2, Art. 11, 26 pp. 
\bibitem{M} M. Mori, Lattice isomorphisms between projection lattices of von Neumann algebras. \emph{Forum Math. Sigma} \textbf{8} (2020), Paper No. e49, 19 pp.
\bibitem{MN} F.J. Murray and J. von Neumann, On rings of operators. \emph{Ann. of Math.} (2) \textbf{37} (1936), no. 1, 116--229. 
\bibitem{N} J. von Neumann, ``Continuous geometry'', Foreword by Israel Halperin,  Princeton Mathematical Series, No. 25 Princeton University Press, Princeton, N.J. (1960). 
\bibitem{Se} I.E. Segal, A non-commutative extension of abstract integration. \emph{Ann. of Math.} (2) \textbf{57} (1953), 401--457. 
\bibitem{Y} F.J. Yeadon, Convergence of measurable operators. \emph{Proc. Cambridge Philos. Soc.} \textbf{74} (1973), 257--268. 
\end{thebibliography}
\end{document}